\newtheorem{thm}{Theorem}[section]
\newtheorem{lem}[thm]{Lemma}
\theoremstyle{definition}
\theoremstyle{remark}
\newcommand\blfootnote[1]{%
  \begingroup
  \renewcommand\thefootnote{}\footnote{#1}%
  \addtocounter{footnote}{-1}%
  \endgroup
}
\numberwithin{figure}{section}
\begin{document}


\title{Unexpected Spectral Asymptotics for Wave Equations on certain Compact Spacetimes}


\author{Jonathan Fox}
\address{Department of Mathematics, Cornell University}
\email{jmf369@cornell.edu}


\author{Robert S. Strichartz}
\address{Department of Mathematics, Cornell University}
\email{str@math.cornell.edu}



\begin{abstract}
We study the spectral asymptotics of wave equations on certain compact spacetimes where some variant of the Weyl asymptotic law is valid. The simplest example is the spacetime $S^1 \times S^2$. For the Laplacian on $S^1 \times S^2$ the Weyl asymptotic law gives a growth rate $O(s^{3/2})$ for the eigenvalue counting function $N(s) = \#\{\lambda _j: 0 \leq \lambda _j \leq s\}$. For the wave operator there are two corresponding eigenvalue counting functions $N^{\pm}(s) = \#\{\lambda _j: 0 < \pm \lambda _j \leq s\}$ and they both have a growth rate of $O(s^2)$. More precisely there is a leading term $\frac{\pi^2}{4}s^2$ and a correction term of $as^{3/2}$ where the constant $a$ is different for $N^{\pm}$. These results are not robust, in that if we include a speed of propagation constant to the wave operator the result depends on number theoretic properties of the constant, and generalizations to $S^1 \times S^q$ are valid for $q$ even but not $q$ odd. We also examine some related examples.
\end{abstract}


 \maketitle 

\section{Introduction} 
The spectrum of the Laplacian on a compact Riemanninan manifold satisfies the well-known Weyl asymptotic law, and similar results hold for other elliptic operators. It is not expected that similar results hold for wave equations. And yet, sometimes the unexpected happens!\blfootnote{2010 AMS Mathematics subject Classification. Primary 35P20 35L05}\blfootnote{Key words and phrases: Spectral asymptotics, d'Alembertian wave equation, compact spacetime, eigenvalue counting function, Zoll surface, globally hypoelliptic}\blfootnote{Research of the second author supported by the National Science Foundation. Grant DMS-1162045.}

Perhaps the simplest example where this occurs is for the d'Alembertian wave operator $\Box = -\frac{\partial ^2}{\partial t ^2} + \bigtriangleup_x$ on the compact spacetime $S^1 \times S^2$. Here we know exactly what the eigenfunctions are, namely $e^{ijt}Y_k(x)$ for $j \in \mathbb{Z}$ and $Y_k$ a spherical harmonic of degree $k \geq 0$, with eigenvalue $\lambda = j^2 - k(k+1)$, and for each $k$ the multiplicity is $2k+1$. The key observation is that $k(k+1) = (k+\frac{1}{2})^2 - \frac{1}{4}$, so there is no possibility that $j^2$ and $k(k+1)$ can get close enough to completely cancel. Thus the eigenvalue $\lambda = 0$ occurs with multiplicity one, with $j=0$ and $k=0$. We form two eigenvalue counting functions

$N^+(s) = \# \{\lambda \in (0,s] \}$

$N^-(s) = \# \{\lambda \in [-s,0) \}$

for the positive and negative parts of the spectrum (counting multiplicty, of course), and observe that these are all finite. Note that this would not be the case if we considered the spacetime $S^1 \times S^3$, for then the spherical harmonics have eigenvalues $k(k+2) =$

$(k+1)^2 -1$, so the eigenvalue $\lambda = 1$ already has infinite multiplicity. We might say that there is "number theory" behind this dichotomy, as even dimensional spheres are like $S^2$ and odd dimensional spheres are like $S^3$. We will see more number theory at work when we consider d'Alembertians with a speed of propagation constant $\Box_c = -\frac{\partial ^2}{\partial t^2} + c^2 \bigtriangleup _x$ in section 4 below.

It is easy to see that the spectrum of $\Box$ consists exactly of all integers. Write $M^+(t)$ for the multiplicity of $\lambda = t$ and $M^-(t)$ for the multiplicity of $\lambda = -t$ where $t$ denotes any positive integer. Then the choice $j = \pm t$, $k = t-1$ gives the eigenvalue $\lambda = t$ with multiplicity $4t-2$, and the choice $j = \pm t$, $k = t$ gives the eigenvalue $\lambda = -t$ with multiplicity $4t + 2$. So we have lower bounds

$M^+(t) \geq 4t-2$

$M^-(t) \geq 4t+2$

In Figures 1.1 and 1.2 we show the graphs of $M^+(t)$ and $M^-(t)$ and in Figures 1.3 and 1.4 we show the graphs of $\frac{M^+(t) - (4t-2)}{t}$ and $\frac{M^-(t) - (4t+2)}{t}$. 
\begin{figure}
	\hspace*{-.35in}
    \includegraphics[scale = .8]{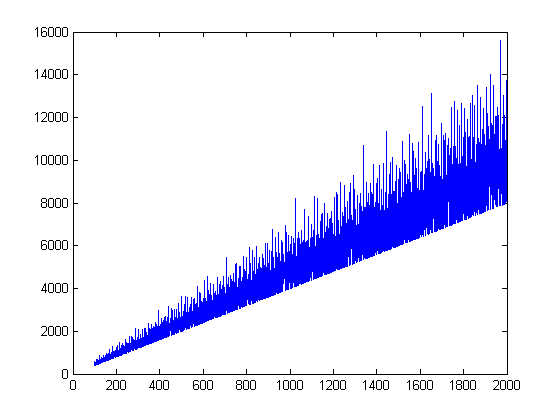}
   \caption{$M^+(t)$ on $[100,2000]$}
\end{figure}
\begin{figure}
	\hspace*{-.35in}
    \includegraphics[scale = .8]{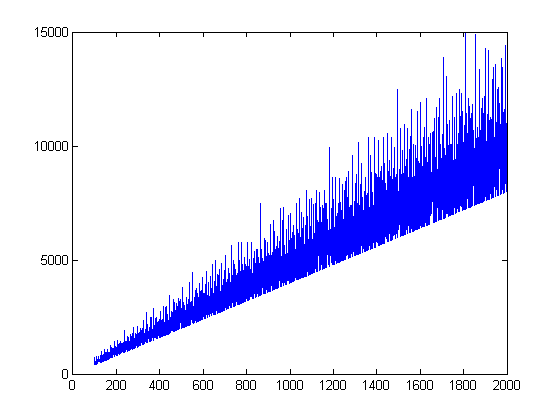}
   \caption{$M^-(t)$ on $[100,2000]$}
\end{figure}
\begin{figure}
	\hspace*{-.35in}
    \includegraphics[scale = .8]{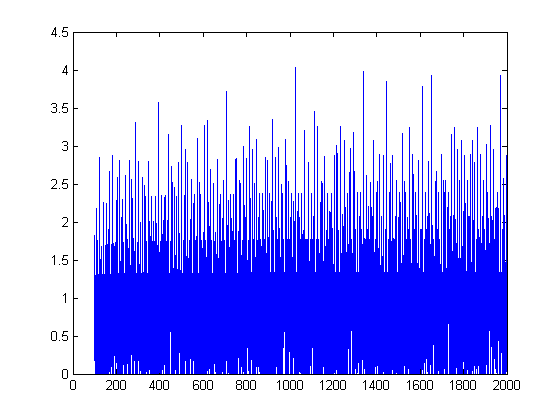}
   \caption{$(M^+(t)-(4t-2))/t$ on $[100,2000]$}
\end{figure}
\begin{figure}
	\hspace*{-.35in}
    \includegraphics[scale = .8]{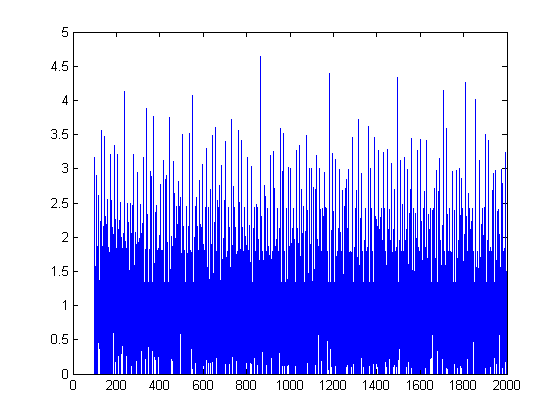}
   \caption{$(M^-(t)-(4t+2))/t$ on $[100,2000]$}
\end{figure}
We might speculate that these are perhaps bounded functions, but it might be difficult to settle this question. Nevertheless it is quite clear that they are close enough to being bounded that $N^{\pm}(s)$ should have growth rate on the order of $s^2$. Note that this is different from the growth rate of $s^{3/2}$ of the eigenvalue counting function of the Laplacian $\frac{\partial ^2}{\partial 	t^2} + \bigtriangleup _x$ given by Weyl's law. Of course there is a simple heuristic explanation that the growth rate is faster because the eigenvalues are smaller, but it is not obvious how to explain the power $2$ without doing the computation. The surprising result, established in section 2, is the asymptotic expression $N^{\pm}(s) \propto \frac{\pi^2}{4}s^2$. In fact there is even a correction term of order $s^{3/2}$, with a different constant for $N^+$ and $N^-$. We have no idea how to "explain" the constant $\frac{\pi^2}{4}$. (Confession: we discovered the constant numerically to many decimal places, then "looked it up" on the internet, and after receiving the verdict "$\frac{\pi^2}{4}$" we found the proof.) Needless to say, $\frac{\pi^2}{4} > 2$, consistent with our lower bounds for $M^{\pm}$.

There is a somewhat related property, called \textit{global hypoellipticity} ([GW]) shared by many of our operators. Recall that if $L$ is any operator that commutes with the Laplacian $\bigtriangleup$ on a compact manifold and so shares a complete set of eigenfunctions, the lower bound 

$(1.1)$ $|\lambda_L| \geq c \lambda_{\bigtriangleup}^{\alpha /2}$ for all $\lambda_{\bigtriangleup} > 0$, for fixed $c$ and $\alpha$, (here $\lambda_L$ and $\lambda_{\bigtriangleup}$ denote the eigenvalues for $L$ and $\bigtriangleup$ associated with any common eigenfunction) implies that $L$ has a parametrix (or resolvant) that is smoothing of order $\alpha$ in the scale of $L^2 -$ Sobolev spaces. In particular, if $Lu$ is $C^{\infty}$ then $u$ is $C^{\infty}$, but more precisely, if $Lu \in H^{s}$ then $u \in H^{s+\alpha}$ where $H^{s}$ denotes the $L^2$-Sobolev space. Such operators are not necessarily hypoellptic in the usual local sense. Wave equations never are. Also they typically do not exhibit smoothing for $L^{p}$-Sobolev spaces. Our basic example satisfies $(1.1)$ with $\alpha = 1$, as this is simply the lower bound

$(1.2)$ $|j^2 - k(k+1)| \geq c(j^2 + k(k+1))^{1/2}$.

Note that when $j=k$ this is the obvious bound $k \geq c(2k^2 + k)^{1/2}$. We leave it to the reader to verify $(1.2)$ by considering separately the cases $|j| > 2k$, $|j| < \frac{1}{2}k$, and $\frac{1}{2}k \leq |j| \leq 2k$. 

It might be tempting to try to relate the exponent $\alpha$ in $(1.1)$ with the power in the asymptotics of $N^{\pm}(s)$. But there is no such relation, since most of our examples satisfy $(1.1)$	with $\alpha = 1$ but have different powers in the asymptotics of $N^{\pm}(s)$. Even worse, there are examples (the first example in section 3 and $L_2$ in section 6) that are not globally hypoelliptic because the 0-eigenspace is infinite dimensional, yet nevertheless have power law asymptotics for $N^{\pm}(s)$. 

In this paper we give the spectral asymptotics for the following examples:
\begin{itemize}
  \item $\Box$ on $S^1 \times S^2$, in Section 2.
  \item $\Box$ on $S^1 \times S^1$, and $\Box + i\frac{\partial}{\partial x}$, in section 3. The first has an infinite dimensional 0-eigenspace, but $N^{\pm}(s)$ have asymptotics of the form $s\log s + (2\gamma - 1)s + O(s^{1/2})$ where $\gamma$ denotes Euler's constant. The second has a 1-dimensional 0-eigenspace and $N^{\pm}(s)$ have asymptotics $s\log s + (4\log 2 + 2\gamma - 1)s + O(s^{1/2})$.
  \item $\Box_c = -\frac{\partial ^2}{\partial t^2} + c^2 \bigtriangleup _x$ on $S^1 \times S^2$, in section 4. We require $c$ to be a rational number with odd numerator, and get asymptotics similar to $\Box$ but multiplied by $c^{-2}$. For rational numbers with even numerator and generic irrational numbers it is easy to see that no such asymptotic is possible. 
  \item The ultrahyperbolic operator $\Box = -\bigtriangleup _x + \bigtriangleup _y$ on products of spheres $S^{p} \times S^{q}$, where $p$ is odd and $q$ is even, in section 5. Aside from the case $p = 1$, $q=2$ discussed in section 2, we find asymptotics for $N^{\pm}(s) = c(p,q)s^{p+q-1} + O(s^{p+q-2})$ for certain specific constants $c(p,q)$ given in terms of the zeta function $\zeta(p+q-1)$. Note that $p + q -1$ is an even integer so the zeta values are known. Note that these higher dimensional examples are better behaved than $S^1 \times S^2$ as there is no $s^{p+q-3/2}$ term in the asymptotics.
  \item The fourth order operators $L_1 = (\frac{\partial}{\partial t})^4 + \bigtriangleup _x$ and $L_2 = -\frac{\partial ^2}{\partial t ^2} - \bigtriangleup ^2 _x$ on $S^1 \times S^2$, in section 6. Note that these operators are not hypoelliptic. For $L_1$ the 0-eigenspace has multiplicity one, and $N^{\pm}(s)$ are $O(s^{3/2})$. For $L_2$ the 0-eigenspace is infinite dimensional but we have more precise $s \log s$ asymptotics for $N^{\pm}(s)$.
\end{itemize}

There are other related examples of wave operators with similar spectral asymptotics. On $S^1 \times S^q$ for $q$ even we may consider the k-forms wave operator. In other words we consider a function $u(t)$ that takes values in the k-forms on $S^q$, and the eigenfunction equation is $(-\frac{\partial ^2}{\partial t^2} + \bigtriangleup^{(k)})u(t) = \lambda u(t)$, where $\bigtriangleup^{(k)}$ is the de Rham Laplacian on k-forms. The spectrum of $\bigtriangleup^{(k)}$ on $S^q$ is described explicitly in \cite{F}. The eigenvalues all have form $m^2 + (q-1)m + k(q-k-1)$ or $m^2 + (q-1)m + (k-1)(q-k)$ where $m$ varies over the nonnegative integers. In other words, they are just translates by fixed constants of the eigenvalues of the function Laplacian. Thus the methods of section 5 may be applied. The multiplicities of the eigenspaces are not given explicitly in [F], but can be deduced from the representation theory of the group $SO(q+1)$. We will not attempt to compute the exact asymptotics here, except to note that in the case $q=2$ the spectrum for 2-forms is identical to the spectrum for functions, and the spectrum for 1-forms is the union of the two (except for the 0-eigenspace).

The last example arises if we replace $S^2$ with the standard metric by $S^2$ with a Zoll surface metric (see \cite{G1}) with the corresponding Laplacian $\bigtriangleup _Z$. The Zoll surfaces have the property that all geodesics are closed and have length $2\pi$. It was observed in \cite{W} and \cite{UZ} that the spectrum of $\bigtriangleup _Z$ is just a bounded perturbation of the spectrum of $\bigtriangleup$, so the $(2k+1)$-dimensional eigenspace with eigenvalue $k(k+1)$ is replaced with a cluster of $2k+1$ eigenvalues in the interval $[k(k+1) - M, k(k+1) + M]$ (here M is a constant that depends on the Zoll surface metric). This leads to bounds $N^{\pm}(s-M) \leq N_Z^{\pm}(s) \leq N^{\pm}(s+M)$ so we can transfer asymptotics for $N^{\pm}$ to asymptotics for $N_Z^{\pm}$.
The key observation that lies behind our work is that the spectrum of the Laplacian on a sphere has gaps. It is natural to ask why this is so. The sphere has a large nonabelian group of symmetries, and this implies that eigenspaces have large multiplicities. However, the existence of a large group of symmetries does not imply gaps in the spectrum. For a counterexample just take the product of a sphere with a generic compact manifold. There is still a large symmetry group and high multiplicities arising from the sphere factor, but the generic factor fills in all potential gaps. The Zoll surfaces suggest that gaps arise when all geodesics are closed and have the same length. This is essentially proven in \cite{H} (see also \cite{G2}).

Our results are usually expressed by writing $N^{\pm}(s)$ as the sum of a specific function of $s$ plus a remainder $R_{N^{\pm}}(s)$ together with an asymptotic estimate for the remainder. Often the remainder takes on both positive and negative values, so that we can hope that averaging will result in large cancellations and hence better asymptotic estimates. We define the average remainder by:

$(1.3)$ $AR_{N^{\pm}}(t) = \frac{1}{t}\int\limits_0^t R_{N^{\pm}}(s)\,ds$.

This idea works very well for the Laplacians, as seen in [1] and [3]. For example, the Laplacians on the 2-torus $S^1 \times S^1$, the eigenvalue counting function $N(s)$ is equal to the number of lattice points inside a disk of radius $\sqrt{s}$. The remainder $R_N(s) = N(s) - \pi s$ is conjectured to be $O(s^{1/4})$, but this is a major unsolved problem. In [1] it is shown that $AR_{N}(t) = O(t^{-1/4})$, and even more precise statements are shown in terms of a specific almost periodic function.

For each of our examples we compute the averages $AR_{N^{\pm}}(t)$. This usually clarifies the behavior of $R_{N^{\pm}}$, and in some cases it allows us to add an additional lower order term to our approximation. These observations are just experimental. The key technical idea in the proofs in [8] is the use of the Poisson summation formula. While it is not inconceivable that a similar method could be used in our examples, it is not straightforward to carry this out. In section 2, the averaging reduces the amount of oscillation, and for $N^-$ it provides strong evidence of a periodic oscillation. This reinforces the conjecture that the remainder is actually $O(s)$. For the first example in Section 3, the average helped us estimate the growth rate of the remainder to be $s^{.0919}$ rather than $s^{.5}$. Without the averaging the oscillations make the growth rate difficult to estimate. For the second example in section 3, the growth rate of $O(\sqrt{s})$ seems correct, and the average makes this quite apparent, with the possibility of a limit. The data in section 4 parallels that from section 2. In section 5 the advantage of averaging is very striking. It allows us to guess a lower order term in the asymptotics that is completely invisible without averaging. Finally, in section 6, averaging appears to produce limits that would not exist without it.

In the theory of Laplacians on fractals, there are many examples with even larger spectral gaps than those occurring on spheres (see \cite{S1}). This led to the observation in \cite{BS} that on the product of two copies of the Sierpinski gasket, there are operators of the form $\bigtriangleup ' - c\bigtriangleup ''$ (here $\bigtriangleup '$ and $\bigtriangleup ''$ denote the Laplacians on each of the factors), for the appropriate choice of the positive constant $c$, that have the same spectral asymptotics as $\bigtriangleup '$ + $\bigtriangleup ''$. It was later shown in \cite{IRS} that these operators are elliptic pseudodifferential operators in the appropriate sense. These examples, while different in important ways, are similar in flavor to our results, and were one of the inspirations for this work. Another inspiration came from \cite{KK}, which describes a different type of unexpected spectral property of wave equations on compact quotients of anti-de Sitter spacetime.

In this paper we show graphically some of the results of our numerical computations. The website \cite{JF} has much more data, and the programs that were used to generate the data.
\section{The wave equation on $S^{1} \times S^{2}$}
Our simplest example is the wave operator $\Box = -\frac{\partial^{2}}{\partial t ^{2}} + \bigtriangleup _{x}$ for $t \in S^{1}$ and $x \in S^{2}$. The eigenfunctions are just $e^{ijt}Y_{k}(x)$ where $Y_{k}$ denotes a spherical harmonic of degree $k$. The spectrum consists of the values:

$(2.1)$ $j^{2} - k(k+1)$ for $j \in \mathbb{Z}$ and $k \geq 0$ with multiplicity $2k + 1$. It is easy to see that the 0-eigenspace just consists of the constants, so has multiplicity one. The two eigenvalue counting functions are:

$(2.2)$  $N^{+}(s) = \sum (2k+1)$ on $0 < j^{2} - k(k+1) \leq s$

$(2.3)$  $N^{-}(s) = \sum (2k+1)$ on $0 < k(k+1) - j^{2} \leq s$ 

Because the eigenvalues are integers, $N^{\pm}(s) = N^{\pm}(\lfloor s \rfloor)$. We will usually assume that $s$ is an integer.

\begin{lem}
$ $\newline
$(2.4)$:
\begin{align*}
N^{+}(s) = 2\sum\limits_{n=1}^{\lfloor\sqrt{s}\rfloor} \lfloor \frac{s - (n-1)^{2}}{2n - 1} \rfloor ^2
\end{align*}
\end{lem}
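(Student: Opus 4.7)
The plan is to rewrite the double sum defining $N^+(s)$ by slicing along the family of ``gap sizes'' $n=|j|-k$ instead of the raw index $k$, exploiting the fact that $j^2-k(k+1)>0$ forces $|j|\ge k+1$. Because $k(k+1)=k^2+k$ lies strictly between $k^2$ and $(k+1)^2$ for $k\ge 1$, the condition $j^2>k(k+1)$ with $j\in\mathbb{Z}$ is equivalent to $|j|\ge k+1$, and the case $j=0$ is easily dismissed. The change of variable $|j|=k+n$ with $n\ge 1$ then converts the eigenvalue into a single algebraic expression.

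Concretely, I would first substitute $j=\pm(k+n)$ to get
\begin{equation*}
j^2-k(k+1)=(k+n)^2-k(k+1)=k(2n-1)+n^2.
\end{equation*}
Since $j$ and $-j$ yield the same eigenvalue but count as distinct eigenfunctions, the sum picks up an overall factor of $2$. For each fixed $n\ge 1$ the constraint $0<j^2-k(k+1)\le s$ reduces to $k(2n-1)+n^2\le s$, i.e.\ $k\in\{0,1,\ldots,K_n\}$ with $K_n=\lfloor(s-n^2)/(2n-1)\rfloor$, and the sum is empty unless $n^2\le s$, which fixes the outer range $1\le n\le\lfloor\sqrt{s}\rfloor$.

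Next I would evaluate the inner sum using the elementary identity
\begin{equation*}
\sum_{k=0}^{K_n}(2k+1)=(K_n+1)^2,
\end{equation*}
so that
\begin{equation*}
N^+(s)=2\sum_{n=1}^{\lfloor\sqrt{s}\rfloor}\left(\left\lfloor\frac{s-n^2}{2n-1}\right\rfloor+1\right)^{\!2}.
\end{equation*}
Finally, the algebraic identity $(s-(n-1)^2)/(2n-1)=(s-n^2)/(2n-1)+1$ pushes the $+1$ inside the floor and yields the stated form $(2.4)$.

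The only real obstacle is bookkeeping: making sure the strict inequality $j^2>k(k+1)$ translates correctly into $|j|\ge k+1$ (including the boundary case $k=0$), and handling the off-by-one between $\lfloor(s-n^2)/(2n-1)\rfloor+1$ and $\lfloor(s-(n-1)^2)/(2n-1)\rfloor$. Neither step is difficult, but both must be checked to avoid a spurious shift in the counting function.
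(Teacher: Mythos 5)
Your proof is correct, and it takes a genuinely more direct route than the paper's. The paper first fixes $k$, counts the admissible $j$ as $2n(k)$ where $n(k)=\lfloor\sqrt{s+k(k+1)}\rfloor-k$, and then must invert this floor-of-a-square-root relation (introducing the auxiliary $\delta$, deriving the range $(2.7)$ for $k$ at fixed $n$) and finish with a summation-by-parts identity $\sum_n 2n(a_n^2-a_{n+1}^2)=\sum_n 2a_n^2$. You instead substitute $|j|=k+n$ from the outset and exploit the algebraic identity $(k+n)^2-k(k+1)=k(2n-1)+n^2$, which is \emph{linear} in $k$ for fixed $n$; the constraint $0<j^2-k(k+1)\le s$ then collapses to $0\le k\le\lfloor(s-n^2)/(2n-1)\rfloor$ with no floors of square roots, no $\delta$, and no Abel summation, and the inner sum $\sum_{k=0}^{K_n}(2k+1)=(K_n+1)^2$ together with $\lfloor x+1\rfloor=\lfloor x\rfloor+1$ gives $(2.4)$ immediately. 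Both parametrize by the gap $n=|j|-k$, but your execution is cleaner and less error-prone; the paper's scheme has the advantage that it transfers with minimal change to the situations in Lemmas 2.3, 4.1, 5.1 and 6.5, where the analogous inner constraint is no longer linear in $k$ and one genuinely must invert a square-root relation. Your bookkeeping points (the equivalence $j^2>k(k+1)\iff|j|\ge k+1$ including $k=0$, and the off-by-one absorbed by $(s-(n-1)^2)/(2n-1)=(s-n^2)/(2n-1)+1$) are exactly the right ones and both check out.
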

\begin{proof}
Note that the condition $0 < j^{2} - k(k+1)$ is equivalent to $k + 1 \leq |j|$, while the condition $j^{2} - k(k+1) \leq s$ is equivalent to $|j| \leq \lfloor \sqrt{s + k(k+1)} \rfloor$.

Define the integer n by:

$(2.5)$ $\lfloor \sqrt{s + k(k+1)} \rfloor = n + k$

Since $j \neq 0$ we may group together the terms corresponding to $\pm j$ for $j > 0$ to obtain:

$(2.6)$ $N^{+}(s) = \sum 2n(2k+1)$ 

for the appropriate values of $k$ and $n$. In fact, $(2.5)$ defines $n$ in terms of $k$, but we wish to fix $n$ and determine the values of $k$ that yield the given value of $n$. We write $(2.5)$ as $\sqrt{s + k(k+1)} = n + k + \delta$ for $0 \leq \delta < 1$ which simplifies to $k = \frac{s - (n + \delta)^{2}}{2n - 1 + 2\delta}$.

Since this is a decreasing function of $\delta$ and $k$ is an integer we obtain the range:

$(2.7)$ $ \lfloor \frac{s - (n+1)^{2}}{2n+1} + 1 \rfloor \leq k \leq \lfloor\frac{s - n^{2}}{2n - 1} \rfloor$

Using the identity $\sum\limits_{a}^{b} (2k+1) = (b+1)^{2} - a^{2}$ we find that the sum over $k$ in $(2.6)$ is:

\begin{flalign*}
(2.8) \hspace{.05 in} \lfloor \frac{s-(n-1)^{2}}{2n-1} \rfloor ^{2} - \lfloor\frac{s-n^{2}}{2n+1}\rfloor^{2} = a_{n}^{2} - a_{n+1}^{2}
\end{flalign*}

for $a_{n} = \lfloor \frac{s-(n-1)^{2}}{2n-1} \rfloor$.

The upper bound $n \leq \sqrt{s}$ comes from $(2.7)$. Then $(2.4)$ follows from
 
$\sum\limits_{n=1}^{\sqrt{s}}2n(a_{n}^{2} - a_{n+1}^{2}) = \sum\limits_{n=1}^{\sqrt{s}}2a_{n}^{2}$.
\end{proof}

\begin{thm}
We have the asymptotic formula

$(2.9)$ $N^{+}(s) = \frac{\pi ^{2}}{4}s^{2} - \frac{4}{3}s^{3/2} + R_{N^{+}}(s)$ with the remainder estimate

$(2.10)$ $R_{N^{+}}(s) = O(s \log(s) )$ as $s \rightarrow \infty$.
\end{thm}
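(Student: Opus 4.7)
The plan is to work directly from Lemma~2.1. Writing $b_n := (s-(n-1)^2)/(2n-1)$ and $N := \lfloor\sqrt{s}\rfloor$, Lemma~2.1 reads $N^+(s) = 2\sum_{n=1}^N \lfloor b_n\rfloor^2$. My strategy is first to replace $\lfloor b_n\rfloor^2$ by $b_n^2$ up to an acceptable error, and then to expand $b_n^2$ as a rational function of $2n-1$ and evaluate the resulting three sums asymptotically.

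For the floor-removal step, since $\lfloor b_n\rfloor^2 = b_n^2 - 2b_n\{b_n\} + \{b_n\}^2$ with $\{b_n\}\in[0,1)$, we have $|\lfloor b_n\rfloor^2 - b_n^2|\le 2b_n+1$, so
\begin{align*}
\sum_{n=1}^N |\lfloor b_n\rfloor^2 - b_n^2| \;\le\; 2s\sum_{n=1}^N \frac{1}{2n-1} + N \;=\; O(s\log s).
\end{align*}
Hence it is enough to find asymptotics for $\sum_{n=1}^N b_n^2$ modulo $O(s\log s)$.

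Next I expand
\begin{align*}
b_n^2 = \frac{s^2}{(2n-1)^2} - \frac{2s(n-1)^2}{(2n-1)^2} + \frac{(n-1)^4}{(2n-1)^2}
\end{align*}
and treat the three pieces separately. Piece~1: using $\sum_{n=1}^\infty (2n-1)^{-2} = \pi^2/8$ together with the tail estimate $\sum_{n>N}(2n-1)^{-2} = 1/(4N) + O(N^{-2})$ (obtained by comparison with $\sum m^{-2}$), one gets $s^2\sum_{n=1}^N (2n-1)^{-2} = (\pi^2/8)s^2 - s^{3/2}/4 + O(s)$. Piece~2: the partial-fraction identity $(n-1)^2/(2n-1)^2 = 1/4 - 1/(2(2n-1)) + 1/(4(2n-1)^2)$ shows the inner sum is $N/4 + O(\log N)$, so the second piece contributes $-s^{3/2}/2 + O(s\log s)$. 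Piece~3: the substitution $m=2n-1$ and long division give $(n-1)^4/(2n-1)^2 = (2n-1)^2/16 - (2n-1)/4 + 3/8 + O(1/(2n-1))$; using $\sum_{n=1}^N(2n-1)^2 = (4N^3-N)/3$ this sums to $N^3/12 + O(N^2) = s^{3/2}/12 + O(s)$.

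Combining, the three contributions to the $s^{3/2}$ coefficient are $-1/4$, $-1/2$, and $1/12$, which add to $-2/3$; multiplying the entire sum by $2$ produces $(\pi^2/4)s^2 - (4/3)s^{3/2} + O(s\log s)$, as claimed. I expect the main obstacle to be careful bookkeeping: three distinct sources contribute to the $s^{3/2}$ correction, and in Piece~1 the tail must be computed to \emph{leading order} $1/(4N)$, not merely as $O(1/N)$, since the $s^2$ prefactor promotes any $1/\sqrt{s}$ correction into a genuine $s^{3/2}$ term; absorbing it into $O(s^{3/2})$ at that step would obliterate the main $s^{3/2}$ contribution to the formula.
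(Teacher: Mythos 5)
Your proposal is correct and follows essentially the same route as the paper: starting from Lemma~2.1, you strip the floor functions at a cost of $O(s\log s)$ (the paper does this via the $\eta_n$ in $(2.11)$--$(2.12)$) and then split $b_n^2$ into the same three pieces as in $(2.13)$, obtaining the coefficients $-\tfrac14$, $-\tfrac12$, $+\tfrac1{12}$ which, after doubling, reproduce the paper's $-\tfrac12-1+\tfrac16=-\tfrac43$. Your remark about needing the tail $\sum_{n>N}(2n-1)^{-2}$ to leading order $1/(4N)$ is exactly the point of the second term in $(2.13)$, and your error bookkeeping is if anything slightly more careful than the paper's.
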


\begin{proof}
Define $\eta_{n}$ in $[0,1)$ by 

$(2.11)$ $\lfloor \frac{s-(n-1)^{2}}{2n-1} \rfloor = \frac{s-(n-1)^{2}}{2n-1} - \eta_{n}$. Then $(2.4)$ becomes 

$(2.12)$ $N^{+}(s) = \sum\limits_{n=1}^{\sqrt{s}} \frac{2s^{2}-4s(n-1)^{2} + 2(n-1)^{4}}{(2n-1)^{2}} - \sum\limits_{n=1}^{\sqrt{s}} \frac{4\eta_{n}(s-(n-1)^{2})}{2n-1} + \sum\limits_{n=1}^{\sqrt{s}}2\eta^{2}_{n}$.

The last two sums in $(2.12)$ are exactly seen to be $O(s\log(s))$. We write the first easily as

$(2.13)$ $s^{2}\sum\limits_{n=1}^{\infty} \frac{2}{(2n-1)^{2}} - s^{2}\sum\limits_{n=\lfloor \sqrt{s} \rfloor + 1}^{\infty} \frac{2}{(2n-1)^{2}} - s\sum\limits_{n=1}^{\lfloor\sqrt{s}\rfloor} \frac{4(n-1)^{2}}{(2n-1)^{2}} + \sum\limits_{n=1}^{\lfloor\sqrt{s} \rfloor} \frac{2(n-1)^{4}}{(2n-1)^{2}}$ 

The first term in $(2.13)$ is exactly $\frac{\pi^{2}}{4}s^{2}$, while the other terms are $-\frac{1}{2}s^{3/2} + O(s^{1/2})$, $-s^{3/2} + O(s\log(s))$ and $\frac{1}{6}s^{3/2} + O(s)$. Adding them up yields $(2.9)$ and $(2.10)$.
\end{proof}

In Figure 2.1 we show the graph of $R_{N^+(s)}/s$. 
\begin{figure}
	\hspace*{-.35in}
    \includegraphics[scale = .8]{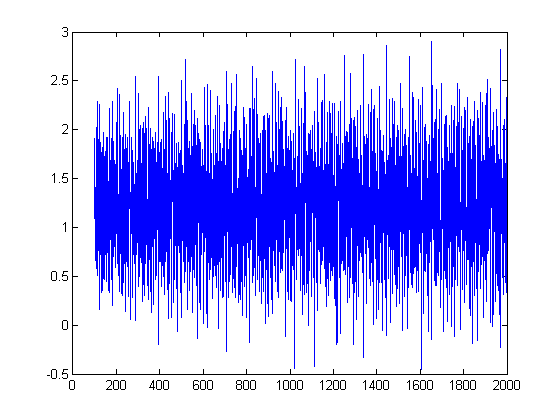}
   \caption{$R_{N^{+}}(s)/s$ from $(2.9)$ on $[100,2000]$}
\end{figure} 
This suggests that the error estimate should be $R_{N^+(s)} = O(s)$ rather than $(2.10)$. We can give a heuristic argument for this as follows. The $O(s\log{s})$ terms in $R_{N^+(s)}$ come from $-s\sum\limits_{n=1}^{\sqrt{s}}\frac{4\eta_n}{2n-1}$ in $(2.12)$ and $s\sum\limits_{n=1}^{\sqrt{s}}(1 - \frac{4(n-1)^2}{(2n-1)^2}) = s\sum\limits_{n=1}^{\sqrt{s}}\frac{4n-3}{(2n-1)^2} = s\log{s} + O(s^{1/2})$ in $(2.13)$. It is reasonable to expect that $\eta_n$ averages to $1/2$, so $-s\sum\limits_{n=1}^{\sqrt{s}}\frac{4\eta_n}{2n-1}$ should behave like $-s\sum\limits_{n=1}^{\sqrt{s}}\frac{2}{2n-1} = -s\log{s} + O(s)$, so the $s\log{s}$ terms would cancel.

In Figure 2.2 we show the graph of $(AR_{N^+}(s) - cs)/s$ for $c = .6154$ determined experimentally. This suggests a more refined asymptotic formula:

\begin{figure}
	\hspace*{-.35in}
    \includegraphics[scale = .8]{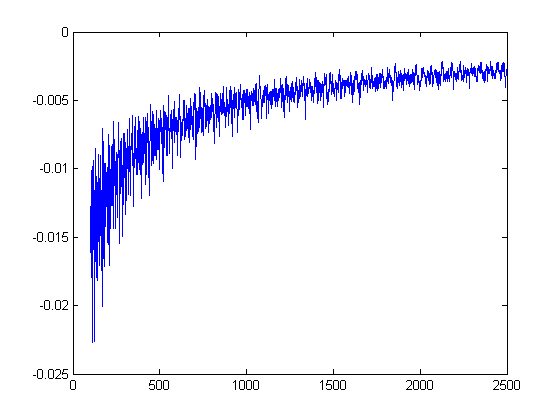}
   \caption{$(AR_{N^+}(s) - cs)/s$ for $c = .6154$ on $[100,2500]$}
\end{figure} 

$(2.9') N^+(s) = \frac{\pi^2}{4}s^2 - \frac{4}{3}s^{3/2} + 2cs + \tilde{R}_{N^+}(s)$ where $A\tilde{R}_{N^+}(s) = o(s)$, but we are not able to give decisive experimental evidence for this asymptotic estimate.

\begin{lem}
$(2.14)$ $N^-(s) = \sum\limits_{k=0}^{\lfloor \sqrt{s + 1/4} - 1/2 \rfloor}(2k+1)^2 + \sum\limits_{n=1}^{\lfloor \sqrt{s + 1/4} + 1/2 \rfloor}2\lfloor \frac{s+n^2}{2n-1} \rfloor^2$.
\end{lem}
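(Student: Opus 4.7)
The proof runs parallel to that of Lemma 2.1, splitting the $(j,k)$ pairs contributing to $N^-(s)$ according to two regimes of $k$. Since the condition $k(k+1)-j^2>0$ forces $|j|\le k$ (for $k\ge 1$), the index $j$ automatically lies in a bounded range, and the task is to enumerate pairs in the strip $\{(j,k):|j|\le k,\ 0<k(k+1)-j^2\le s\}$ weighted by the multiplicity $2k+1$.

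The first stage isolates the \emph{interior} $k$'s, for which $k(k+1)\le s$; here the upper constraint is automatic since $k(k+1)-j^2\le k(k+1)\le s$, so every $j\in\{-k,\dots,k\}$ contributes, giving $(2k+1)^2$ per such $k$. The largest interior $k$ is $K=\lfloor\sqrt{s+1/4}-1/2\rfloor$, the largest integer with $k(k+1)\le s$, which yields the first sum in $(2.14)$.

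The second stage is the dual of the substitution used in Lemma 2.1: whereas $(2.5)$ there defined $n$ by $\lfloor\sqrt{s+k(k+1)}\rfloor=n+k$, measuring how far $|j|$ exceeds $k$, I would now set $|j|=k-n+1$ for $n\ge 1$, measuring how far $|j|$ falls short of $k$. The algebraic identity
\[
(k-n+1)^2 = k(k+1) - (2n-1)k + (n-1)^2
\]
collapses the constraint $0<k(k+1)-j^2\le s$ to the single inequality $k\le \frac{s+(n-1)^2}{2n-1}$, which combined with $|j|\ge 1$ (i.e.\ $k\ge n$) gives the admissible range $n\le k\le U_n$ with $U_n=\lfloor\frac{s+(n-1)^2}{2n-1}\rfloor$. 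The contribution from parameter $n$ is then $2\sum_{k=n}^{U_n}(2k+1) = 2\bigl((U_n+1)^2-n^2\bigr)$, the leading $2$ from combining $\pm j$. The further identity $(n-1)^2+(2n-1)=n^2$, exactly as used for the analogous floor manipulation in Lemma 2.1, rewrites $U_n+1=\lfloor\frac{s+n^2}{2n-1}\rfloor$, matching the summand in the second sum of $(2.14)$. The maximal $n$ with $U_n\ge n$ is determined by $n(n-1)\le s$, equivalent to $n\le\lfloor\sqrt{s+1/4}+1/2\rfloor$, which is precisely the stated upper limit.

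The main obstacle is purely bookkeeping, not estimation: one must verify both floor identities carefully, ensure that the $j=0$ contribution is counted exactly once (it sits naturally inside the interior $(2k+1)^2$ count for each $k\le K$, while being excluded from the $n$-parameterization by the requirement $|j|\ge 1$), and handle the edge cases where $k(k+1)=s$ or $n(n-1)=s$ exactly so that the critical integer lands on the correct side of the floor function. No analytic work is required; $(2.14)$ is purely a combinatorial identity, and the proof reduces to matching the telescoped interior and boundary contributions.
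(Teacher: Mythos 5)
Your parametrization $|j|=k-n+1$ is a genuinely different route from the paper's: the paper fixes $k$, sets $n=\lfloor k+1-\sqrt{k(k+1)-s}\rfloor$ equal to half the number of admissible $j$, and then regroups the $k$'s by level sets of $n$ and telescopes, whereas you index individual lattice pairs and sum over $k$ directly for each fixed $n$. In principle yours is cleaner, but as written it has two concrete gaps that do not cancel. First, the two stages overlap: the range $n\le k\le U_n$ you derive is the full solution set of $0<k(k+1)-(k-n+1)^2\le s$ with $|j|\ge 1$, so it re-counts every interior pair ($k(k+1)\le s$, $1\le|j|\le k$) already counted with full weight inside the $(2k+1)^2$ of stage one; if you instead restrict stage two to $k(k+1)>s$, the binding lower endpoint becomes $k\ge\lfloor\sqrt{s+1/4}+1/2\rfloor$, not $k\ge n$. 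Second, the quantity you actually compute per $n$, namely $2\bigl((U_n+1)^2-n^2\bigr)$, is not the summand $2\lfloor\frac{s+n^2}{2n-1}\rfloor^2$ of $(2.14)$: the discarded $-2n^2$ terms total $\sim\frac{2}{3}s^{3/2}$, far too large to ignore when Theorem 2.4 extracts the $s^{3/2}$ coefficient from this lemma. (Also, $U_n\ge n$ is equivalent to $n(n+1)\le s+1$, not $n(n-1)\le s$, so your stated cutoff is slightly off.)

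The instructive point is that your parametrization, carried through consistently --- no interior/exterior split, $j=0$ handled separately (contributing $\lfloor\sqrt{s+1/4}+1/2\rfloor^2-1$), and the $-2n^2$ retained --- yields the exact identity $N^-(s)=\lfloor\sqrt{s+1/4}+1/2\rfloor^2-1+\sum_n 2\bigl(\lfloor\frac{s+n^2}{2n-1}\rfloor^2-n^2\bigr)$, which checks against direct counts ($N^-(2)=19$, $N^-(10)=272$), whereas the right side of $(2.14)$ evaluates to $36$ and $327$ at those arguments. So $(2.14)$ is not actually an identity, and no bookkeeping will prove it: in the paper's own argument the $k$-range for the largest value of $n$ is truncated at $k=\lfloor\sqrt{s+1/4}+1/2\rfloor$ rather than at $\lfloor\frac{s+n^2}{2n+1}\rfloor+1$, so the Abel summation $\sum 2n(c_n^2-c_{n+1}^2)=\sum 2c_n^2$ leaves a nonzero boundary term of size about $2s^{3/2}$ (unlike Lemma 2.1, where $a_{\lfloor\sqrt{s}\rfloor+1}=0$ makes the telescoping exact). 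Your write-up therefore does not prove the stated lemma, but a corrected version of your argument identifies what the true closed form should be, and shows that the $s^{3/2}$ coefficient in Theorem 2.4 is affected.
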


\begin{proof}
We note that the condition $j^2 < k(k+1)$ is equivalent to $|j| < k$. To analyze the condition $j^2 \geq k(k+1) - s$ we consider two cases: 
\begin{enumerate}[I.]
\item
If $s \geq k(k+1)$ the condition is always satisfied, so there are exactly $(2k+1)$ values of $j$ satisfying $|j| \leq k$ and so the total contribution to $N^-(s)$ from this case is the first sum on the right side of $(2.14)$.
\item
If $s < k(k+1)$ then the condition is $|j| \geq \sqrt{k(k+1) - s}$. Since $|j|$ must be an integer we have $\lceil \sqrt{k(k+1) - s} \rceil \leq |j| \leq k$ so the total number of such $j$ is $2(k + 1 - \lceil \sqrt{k(k+1) - s} \rceil) = 2[k+1 - \sqrt{k(k+1) -s}]$. The condition $\sqrt{k(k+1) - s} \leq k$ means $k \leq s$, while $s < k(k+1)$ means $\sqrt{s + \frac{1}{4}} - \frac{1}{2} < k$ or $[\sqrt{s + \frac{1}{4}} + \frac{1}{2}] \leq k$. Thus the total contribution of these terms to $N^-(s)$ is 

$(2.15)$ $\sum\limits_{k = \lfloor \sqrt{s + 1/4} + 1/2 \rfloor}^{s} 2\lfloor k + 1 - \sqrt{k(k+1) - s} \rfloor (2k+1)$. To complete the proof we need to show that $(2.15)$ is equal to the second sum on the right side of $(2.14)$. To do this we define the integer $n = \lfloor k + 1 - \sqrt{k(k+1) -s} \rfloor$, and ask which values of $k$ correspond to a fixed value of $n$. This means $n = k + 1 - \sqrt{k(k+1) - s} - \delta$ for $0 \leq \delta < 1$, hence 

\begin{align*}
(2.16) \hspace{.05 in} k = \frac{s + (n - 1 + \delta)^2}{2n-1 + 2\delta} 
\end{align*}

From $(2.16)$ we obtain $\lfloor \frac{s + n^2}{2n+1} \rfloor + 1 \leq k \leq \lfloor \frac{s + (n-1)^2}{2n-1} \rfloor$ and this yields the second sum in $(2.15)$ as in the proof of Lemma 2.1.
\end{enumerate}
\end{proof}

\begin{thm}
We have the asymptotic formula

$(2.17)$ $N^-(s) = \frac{\pi^2}{4}s^2 + 2s^{3/2} + R_{N^-(s)}$ with the remainder estimate

$(2.18)$ $R_{N^-(s)} = O(s\log{s})$ as $s \rightarrow \infty$.
\end{thm}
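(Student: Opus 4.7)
The plan is to apply Lemma 2.3 and analyze the two sums on the right-hand side of $(2.14)$ separately, following the same pattern as the proof of Theorem 2.2. Write $K := \lfloor\sqrt{s+1/4} - 1/2\rfloor$ and $N := \lfloor\sqrt{s+1/4} + 1/2\rfloor$, both of which equal $\sqrt{s} + O(1)$. The first sum $\sum_{k=0}^{K}(2k+1)^2$ has the closed form $\tfrac{1}{3}(K+1)(2K+1)(2K+3)$, which equals $\tfrac{4}{3}K^3 + O(K^2) = \tfrac{4}{3}s^{3/2} + O(s)$. This already produces a clean $s^{3/2}$ contribution, with the positive sign — the opposite of what occurred for $N^+$.

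For the second sum I would introduce $\eta_n \in [0,1)$ by $\lfloor \frac{s+n^2}{2n-1}\rfloor = \frac{s+n^2}{2n-1} - \eta_n$, square, and split into a ``polynomial'' piece and $\eta_n$-error pieces $-4\sum \eta_n\frac{s+n^2}{2n-1} + 2\sum \eta_n^2$, both of which are $O(s\log s)$ by the same elementary estimates used after $(2.12)$. The polynomial piece is $2\sum_{n=1}^{N}\bigl(\tfrac{s+n^2}{2n-1}\bigr)^2$, which I would expand via $(s+n^2)^2 = s^2 + 2sn^2 + n^4$ into three sums. For the $s^2$ piece, writing $\sum_{n=1}^{N} = \sum_{n=1}^{\infty} - \sum_{n=N+1}^{\infty}$ gives $\frac{\pi^2}{4}s^2$ from $\sum_{n\geq 1}(2n-1)^{-2} = \pi^2/8$, minus a tail whose integral comparison yields $-\tfrac{1}{2}s^{3/2} + O(s)$. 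For the middle piece, I would use the identity $\tfrac{4n^2}{(2n-1)^2} = 1 + \tfrac{2}{2n-1} + \tfrac{1}{(2n-1)^2}$ to get $sN + O(s\log s) = s^{3/2} + O(s\log s)$. For the $n^4$ piece, long division gives $\tfrac{n^4}{(2n-1)^2} = \tfrac{(2n-1)^2}{16} + \tfrac{2n-1}{4} + O(1) + \tfrac{1}{16}\bigl(\tfrac{4}{2n-1} + \tfrac{1}{(2n-1)^2}\bigr)$, whose leading term sums to $\tfrac{N^3}{12}$, so that $2\sum \tfrac{n^4}{(2n-1)^2} = \tfrac{1}{6}s^{3/2} + O(s)$.

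Adding everything, the coefficient of $s^{3/2}$ is $\tfrac{4}{3} - \tfrac{1}{2} + 1 + \tfrac{1}{6} = 2$, giving exactly $(2.17)$ with remainder $O(s\log s)$. The main obstacle is purely bookkeeping: the $s^{3/2}$ term in $(2.17)$ is an aggregate of four separate contributions (the closed-form cubic in $K$, the tail of the $\pi^2/8$ series, and the subleading parts of the middle and $n^4$ pieces), each of which must be extracted to the correct constant — any slip in rounding $N$ or $K$ to $\sqrt{s}$, or in the polynomial long division, will spoil the clean $+2$. The $O(s\log s)$ error term itself requires no delicate analysis, since both the $\eta_n$-error and the $s\log s$ contribution coming from $2s\sum (2n-1)^{-1}$ in the middle piece are trivially bounded without any cancellation argument.
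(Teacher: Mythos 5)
Your proposal is correct and follows essentially the same route as the paper: the paper's own proof is a two-line sketch that reduces to the proof of Theorem 2.2 and records the coefficient as $2 = \tfrac{4}{3} - \tfrac{1}{2} + 1 + \tfrac{1}{6}$, which is precisely the four-way decomposition (first sum in $(2.14)$, tail of the $\pi^2/8$ series, sign-flipped cross term, and $n^4$ term) that you carry out in detail.
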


\begin{proof}
The proof is similar to the proof of Theorem 2.2. The different coefficient of $s^{3/2}$ arises as $2 = \frac{4}{3} - \frac{1}{2} + 1 + \frac{1}{6}$, with $\frac{4}{3}$ coming from the first sum in $(2.14)$, and the other terms arising as in the proof of Theorem 2.2 except for the change in sign in the cross-terms of $(s + n^2)^2$ as opposed to $(s - (n-1)^2)^2$.
\end{proof}

In Figure 2.3 we show the graph of $R_{N^-(s)}/s$, supporting the conjecture that it is bounded. In Figure 2.4 we show the graph of $AR_{N^-}(s)/s$. This suggests that a more refined asymptotic formula would be:

$(2.17')$ $N^-(s) = \frac{\pi^2}{4}s^2 + 2s^{3/2} + g(s)s + \tilde{R}_{N^-(s)}$ 

where $g$ is a periodic (or almost periodic) function, but it is not clear what estimate the new remainder $\tilde{R}_{N^-}$ should satisfy. We have no explanation for why the $N^-$ counting function exhibits this interesting structure as compared with $N^+$. 
\begin{figure}
	\hspace*{-.35in}
    \includegraphics[scale = .8]{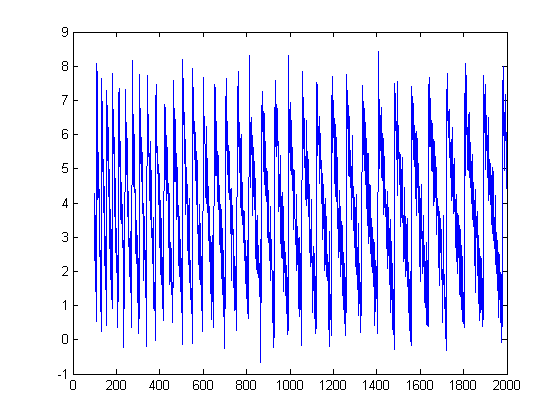}
   \caption{$R_{N^-(s)}/s$ from $(2.17)$ on $[100,2000]$}
\end{figure}
\begin{figure}
	\hspace*{-.35in}
    \includegraphics[scale = .8]{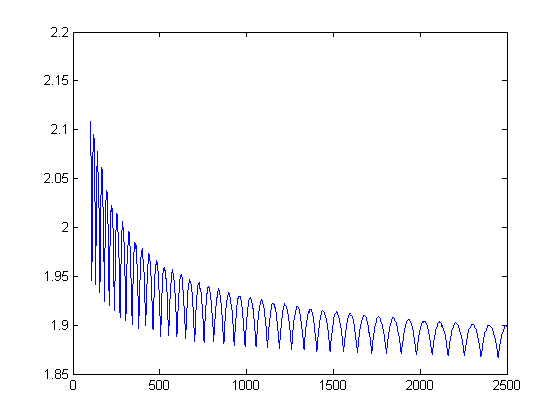}
   \caption{$AR_{N^-}(s)/s$ on $[100,2500]$}
\end{figure}
\section{Wave equation on $S^{1} \times S^{1}$}
The wave operator $\Box = -\frac{\partial ^{2}}{\partial t} + \frac{\partial ^2}{\partial x^2}$ for $t \in S^{1}$ and $x \in S^{1}$ has eigenfunctions $e^{ijt}e^{ikx}$ with eigenvalues $j^{2} - k^{2}$ for $j,k \in \mathbb{Z}$. Obviously it has an infinite dimensional 0-eigenspace corresponding to $j = k$, so it is different in this respect from $\Box$ on $S^{1} \times S^{2}$. Aside from this, we can  study the behavior of $N^{\pm}(s)$ as before. Here $N^{+}(s) = N^{-}(s)$ is obvious by interchanging $j$ and $k$. The value of $N^{+}(s)$ is then just the number of solutions of the inequalities $0 < j^{2} - k^{2} \leq s$. Note that $k^{2} < j^{2}$ is equivalent to $|k| + 1 \leq |j|$ and $j^{2} \leq s + k^{2}$ is equivalent to $|j| \leq \lfloor \sqrt{s + k^{2}} \rfloor$. So:

$(3.1)$ $N^{+}(s) = 2\lfloor \sqrt{s} \rfloor + 4 \sum\limits_{n=1}^{\lfloor \frac{s-1}{2} \rfloor} \left( \lfloor \sqrt{s+k^{2}} \rfloor - k \right)$, where the first term corresponds to $k=0$ and the sum groups together $\pm j$, $\pm k$. The upper bound for $k$ comes from the requirement that $k+1 \leq \sqrt{s + k^{2}}$.

\begin{lem}
$(3.2)$ $N^{+}(s) = 2\lfloor \sqrt{s} \rfloor + 4 \sum\limits_{n=1}^{\lfloor \sqrt{s} \rfloor} \lfloor \frac{s-n^{2}}{2n} \rfloor$
\end{lem}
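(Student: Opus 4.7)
The plan is to prove (3.2) by a change of variable in the sum of (3.1), justified by a double-counting argument. Both sums (after subtracting the $k=0$ contribution $2\lfloor\sqrt{s}\rfloor$ and dividing by $4$) should count the same finite set of lattice points
$$A = \{(j,k) \in \mathbb{Z}^2 : j > k \geq 1,\ j^2 - k^2 \leq s\},$$
with the original sum in (3.1) obtained by fixing $k$ and the target sum in (3.2) obtained by fixing $n := j-k$.

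First I would recover $N^+(s) = 2\lfloor\sqrt{s}\rfloor + 4|A|$ from the direct description of the spectrum. The $k=0$ term accounts for the $2\lfloor\sqrt{s}\rfloor$ solutions $j = \pm 1, \ldots, \pm\lfloor\sqrt{s}\rfloor$ of $0 < j^2 \leq s$, while for every $k \neq 0$ the four sign combinations $(\pm|j|, \pm|k|)$ contribute equally to the eigenvalue, reducing the count to $4|A|$. For fixed $k \geq 1$, the number of $j$ with $k < j$ and $j^2 \leq s+k^2$ is $\lfloor\sqrt{s+k^2}\rfloor - k$, which vanishes as soon as $k+1 > \sqrt{s+k^2}$, i.e., $k > (s-1)/2$. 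It is therefore harmless to extend the range of summation up to $k = \lfloor\sqrt{s}\rfloor$, which conveniently matches the upper index of the target sum.

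The key step is the substitution $n = j - k \geq 1$. For fixed $n$, a pair $(n+k, k)$ lies in $A$ iff $k \geq 1$ and $(n+k)^2 - k^2 = n^2 + 2nk \leq s$, i.e., $1 \leq k \leq (s-n^2)/(2n)$. The number of such $k$ equals $\lfloor (s-n^2)/(2n) \rfloor$ when $n^2 \leq s$, and is zero when $n^2 > s$ (the floor would then be negative, so there are no positive integer solutions). Summing over $n$ therefore gives
$$|A| = \sum_{n=1}^{\lfloor\sqrt{s}\rfloor} \left\lfloor \frac{s-n^2}{2n} \right\rfloor,$$
which multiplied by $4$ and added to $2\lfloor\sqrt{s}\rfloor$ yields (3.2).

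I do not anticipate any genuine obstacle, as the whole argument is a straightforward bijection followed by counting in both orders; the only bookkeeping to watch is the correct handling of the $k=0$ term and the matching of truncation points for the two sums. Note that this approach parallels the change of variable used in Lemma 2.1, but is slightly simpler because each value of $n$ corresponds to an interval of $k$'s starting at $k=1$, so no Abel-summation-style telescoping is needed.
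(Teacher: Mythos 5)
Your argument is correct, and it reaches $(3.2)$ by a cleaner route than the paper does. The paper starts from $(3.1)$, defines $n = \lfloor\sqrt{s+k^2}\rfloor - k$ (the number of admissible $j$ for a given $k$), solves $\sqrt{s+k^2} = n+k+\delta$ to find the interval of $k$'s realizing each fixed value of $n$, and then recovers $(3.2)$ via the telescoping identity $\sum_n n(b_n - b_{n+1}) = \sum_n b_n$ --- the same template as Lemma 2.1. You instead fiber the lattice-point set $A$ directly over $n = j-k$, where the constraint $j^2 - k^2 = n^2 + 2nk \le s$ is linear in $k$, so the count $\lfloor (s-n^2)/(2n)\rfloor$ drops out immediately, with no square-root manipulations and no Abel summation. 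The reason your shortcut is available here but not in Lemma 2.1 is that the multiplicity on $S^1\times S^1$ is $1$ rather than $2k+1$: the paper's level-set/telescoping machinery exists to handle the weighted sums, and is being reused here out of uniformity, whereas an unweighted lattice count can be fibered over whichever coordinate is most convenient. One small slip in your write-up: the aside about extending the range of the $k$-sum ``up to $k=\lfloor\sqrt{s}\rfloor$'' is backwards --- the sum in $(3.1)$ naturally truncates at $k=\lfloor (s-1)/2\rfloor$, which for $s\ge 5$ is at least $\lfloor\sqrt{s}\rfloor$, and the upper index $\lfloor\sqrt{s}\rfloor$ in $(3.2)$ bounds $n=j-k$, not $k$ --- but this remark plays no role in your actual derivation, which rests entirely on the fibering over $n$ and is sound.
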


\begin{proof}
Define $n$ by $\lfloor \sqrt{s+k^{2}} = n+k$, or 

$(3.3)$ $\sqrt{s+k^{2}} = n + k + \delta$ for $0 \leq \delta < 1$. If we solve $(3.3)$ we obtain $k = \frac{s-(n+\delta)^{2}}{2(n+\delta)}$, so for fixed $n$ we require $\frac{s-(n+1)^{2}}{2(n+1)} < k \leq \frac{s-n^{2}}{2n}$, so $(3.1)$ becomes

$N^+(s) = 2\lfloor \sqrt{s} \rfloor + 4\sum\limits_{n=1}^{\lfloor \sqrt{s+1} -1 \rfloor}n\left( \lfloor \frac{s - n^2}{2n} \rfloor - \lfloor \frac{s - (n+1)^2}{2(n+1)} \rfloor \right) = 2\lfloor \sqrt{s} \rfloor + 4 \sum\limits_{n=1}^{\lfloor \sqrt{s+1}-1 \rfloor}\lfloor\frac{s-n^2}{2n}\rfloor$.

This is equivalent to $(3.2)$.
\end{proof}

\begin{thm}
We have the asymptotic expansion

$(3.4)$ $N^{+}(s) = s\log s + (2\gamma - 1)s + R_{N^{+}}(s)$ as $s \rightarrow \infty$ where 

$(3.5)$ $R_{N^{+}}(s) = O(\sqrt{s})$. Here $\gamma$ denotes Euler's constant. 
\end{thm}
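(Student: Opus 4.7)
The plan is to start from Lemma 3.1 and apply the identity $\lfloor x\rfloor = x - \{x\}$ to separate the sum into a smooth main term and a fractional part remainder. Setting $N = \lfloor\sqrt{s}\rfloor$, I would write
\begin{equation*}
N^+(s) \;=\; 2N \;+\; 4\sum_{n=1}^{N}\frac{s-n^2}{2n} \;-\; 4\sum_{n=1}^{N}\Bigl\{\tfrac{s-n^2}{2n}\Bigr\}.
\end{equation*}
The final sum contributes $O(N) = O(\sqrt{s})$ since there are $N$ terms each in $[0,1)$, and $2N = O(\sqrt{s})$, so these immediately fold into the remainder.

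The first sum is the one that produces the main asymptotics. I would split it as $\frac{s}{2}\sum_{n=1}^{N}\frac{1}{n} - \frac{1}{2}\sum_{n=1}^{N}n$, and then apply the standard estimates $\sum_{n\leq N}\frac{1}{n} = \log N + \gamma + O(1/N)$ and $\sum_{n\leq N} n = \tfrac{N(N+1)}{2}$. Replacing $N$ by $\sqrt{s} + O(1)$ gives $\log N = \tfrac{1}{2}\log s + O(1/\sqrt{s})$ and $N^2 = s + O(\sqrt{s})$. Multiplying through by $4$, one finds
\begin{equation*}
4\cdot\frac{s}{2}\bigl(\tfrac{1}{2}\log s + \gamma\bigr) - 4\cdot\frac{N(N+1)}{4} \;=\; s\log s + 2\gamma s - s + O(\sqrt{s}),
\end{equation*}
which is exactly $s\log s + (2\gamma-1)s + O(\sqrt{s})$.

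The proof is thus essentially bookkeeping once Lemma 3.1 is in place: the only real content is (i) isolating the harmonic sum, which produces both the $s\log s$ term and the Euler constant, and (ii) the cancellation between $\frac{\gamma s}{2}\cdot 4$ and $-\frac{s}{4}\cdot 4$ that yields the coefficient $2\gamma - 1$. There is no genuine obstacle — the main point of care is ensuring that replacing $N$ by $\sqrt{s}$ in both $\log N$ and $N^2$ is done with errors of size $O(\sqrt{s})$ (not worse), which follows from $N = \sqrt{s} - \{\sqrt{s}\}$ and the elementary bound $N^2 = s - 2\sqrt{s}\{\sqrt{s}\} + \{\sqrt{s}\}^2$. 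The tail of the harmonic sum and the fractional parts $\{(s-n^2)/2n\}$ never concentrate enough to improve on $O(\sqrt{s})$ without further work, consistent with the stated remainder.
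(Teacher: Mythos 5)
Your proposal is correct and follows essentially the same route as the paper: both extract the fractional parts $\{(s-n^2)/2n\}$ (the paper calls them $\eta_n$) as an $O(\sqrt{s})$ contribution and reduce the main term to the harmonic sum $\sum_{n\le \lfloor\sqrt{s}\rfloor} 1/n = \log\lfloor\sqrt{s}\rfloor+\gamma+O(1/\sqrt{s})$ and the arithmetic sum $\sum_{n\le\lfloor\sqrt{s}\rfloor} n$. The bookkeeping with $N=\lfloor\sqrt{s}\rfloor$ is carried out correctly, so there is nothing to add.
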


\begin{proof}
Write $\frac{s-n^{2}}{2n} = \lfloor \frac{s-n^{2}}{2n} + \eta_{n}$ for $0 \leq \eta_{n} < 1$. Then

$N^{+}(s) = 4\sum\limits_{n=1}^{\lfloor\sqrt{s}\rfloor}\left(\frac{s}{2n} - \frac{n}{2}\right) - 4\sum\limits_{n=1}^{\lfloor\sqrt{s}\rfloor} \eta_{n} + 2\lfloor \sqrt{s} \rfloor$

Note that $2s\sum\limits_{n=1}^{\lfloor \sqrt{s} \rfloor}\frac{1}{n} = 2s(\log{\lfloor \sqrt{s} \rfloor} + \gamma + O(\frac{1}{\sqrt{s}}))$ and $2\sum\limits_{n=1}^{\lfloor \sqrt{s} \rfloor}n = \lfloor \sqrt{s} \rfloor^2 + \lfloor \sqrt{s} \rfloor$. Adding everything up yields $(3.4)$ with the remainder estimate $(3.5)$. \end{proof}
Figure 3.1 shows that the graph of $R_{N^+(s)}/s^{\alpha}$, where $\alpha = .0919$ was determined experimentally. 
\begin{figure}
	\hspace*{-.35in}
    \includegraphics[scale = .8]{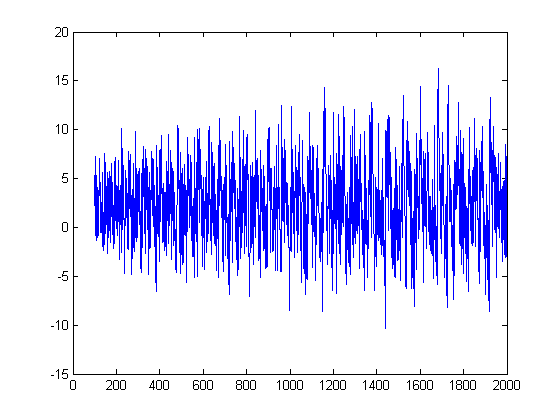}
   \caption{$R_{N^+(s)}/s^{.0919}$ from $(3.4)$ on $[100,2000]$}
\end{figure}
\begin{figure}
	\hspace*{-.35in}
    \includegraphics[scale = .8]{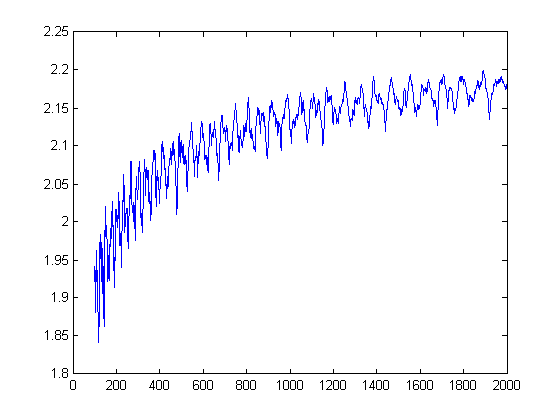}
   \caption{$AR_{N^+}(s)/s^{.0919}$ from $(3.4)$ on $[100,2000]$}
\end{figure}
In Figure 3.2 we show the graph of $AR_{N^+}(s)/s^{\alpha}$. 
This gives experimental evidence that error estimate $(3.5)$ can be greatly improved.

We can overcome the problem with the infinite dimensional 0-eigenspace by considering the modified wave operator:

$(3.6)$ $\Box' = \Box + i \frac{\partial}{\partial x}$

The eigenfunctions are the same, but the eigenvalues are now $j^{2} - k(k+1)$. In other words, we have the same eigenvalues as for $\Box$ on $S^{1} \times S^{2}$, but the multiplicity is one rather than $(2k+1)$, so the 0-eigenspace consists of the constants, and has dimension one. The definition of $N^{+}(s)$ leads to 

$(3.1')$ $N^{+}(s) = 2\lfloor \sqrt{s} \rfloor + 4\sum\limits_{n=1}^{\lfloor \sqrt{s} \rfloor} \left( \lfloor \sqrt{s + k(k+1)} - k \right)$ in place of $(3.1)$. Reasoning as in Lemma 3.1 yields 

$(3.2')$ $N^{+}(s) = 2\lfloor \sqrt{s} \rfloor + 4 \sum\limits_{n=1}^{\lfloor\sqrt{s}\rfloor} \lfloor \frac{s-n^{2}}{2n-1} \rfloor$ in place of $(3.2)$. We note that the difference between $(3.2')$ and $(3.2)$ is $4\sum\limits_{n=1}^{\lfloor \sqrt{s} \rfloor} \frac{s}{2n(2n-1)} = (4\log2)s + O(\sqrt{s})$ since $\sum\limits_{n=1}^{\infty} \frac{1}{2n(2n-1)} = \log 2$. So instead of $(3.4)$ we have 

$(3.4')$ $N^{+}(s) = s \log s + (4\log 2 + 2 \gamma - 1)s + R_{N^{+}}(s)$ with the same error estimate $(3.5)$. 

In this case $N^{-}(s)$ is not equal to $N^{+}(s)$, and we will compute it by interchanging the roles of $j$ and $k$. Note that we may restrict the values of $k$ to $k \geq 0$ and double, since $k$ and $-k-1$ generate the same eigenvalue. For $j = 0$ we have the condition $0 < k(k+1) \leq s$ so this  contributes $2(\lfloor \sqrt{s} \rfloor -1)$ to $N^{-}(s)$. For $j > 0$ we have the conditions $j \leq k$ and $k(k+1) \leq s + j^{2}$, which is equivalent to $k \leq \lfloor \sqrt{s + \frac{1}{4} + j^{2}} - \frac{1}{2} \rfloor$. This yields

$(3.7)$ $N^{-}(s) = 2(\lfloor \sqrt{s}\rfloor - 1) + 4\sum\limits_{n=1}^{s} \left( \lfloor \sqrt{s + \frac{1}{4} + j^{2}} + \frac{1}{2} \rfloor - j \right)$ in place of $(3.1)$. If we set $n = \lfloor \sqrt{s+\frac{1}{4} + j^{2}} + \frac{1}{2}\rfloor - j$, so $\sqrt{s + \frac{1}{4}+j^{2}} = n + j + \eta_{j}$, then $\frac{s-n^{2}-n}{2n+1} < j < \frac{s-n^{2}+n}{2n-1}$, so 
\begin{align*}
(3.8) \hspace{.05 in} \space \space \space \space N^{-}(s) = 2(\lfloor \sqrt{s} \rfloor - 1) + 4\sum\limits_{n=1}^{\lfloor\sqrt{s}\rfloor}n\left(\lfloor \frac{s-n^{2} + n}{2n-1}\rfloor - \lfloor \frac{s-n^{2}-n}{2n+1}\rfloor \right) 
\\
= 2(\lfloor \sqrt{s} \rfloor - 1) + 4\sum\limits_{n=1}^{\lfloor\sqrt{s} \rfloor} \lfloor \frac{s-n^{2}+n}{2n-1} \rfloor
\end{align*}
in place of $(3.2)$. Comparing this with $(3.2')$ we see the difference is $O(\sqrt{s})$, so $N^{-}(s)$ satisfies the same asymptotics $(3.4')$ as $N^{+}(s)$.

In Figure 3.3 we show the graph of $R_{N^+}(s)/\sqrt{s}$ from $(3.4')$. This suggests that the errorestimate cannot be improved. Figure 3.4 is the graph of its average divided by $\sqrt{s}$ suggesting that perhaps the limit as $s \rightarrow \infty$ exists.
\begin{figure}
	\hspace*{-.35in}
    \includegraphics[scale = .8]{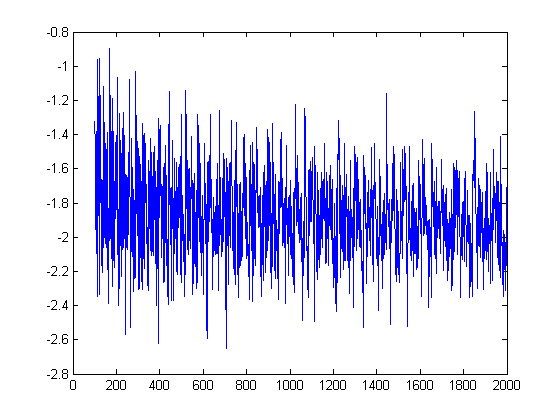}
   \caption{$R_{N^+(s)}/\sqrt{s}$ from $(3.4')$ on $[100,2000]$}
\end{figure}
\begin{figure}
	\hspace*{-.35in}
    \includegraphics[scale = .8]{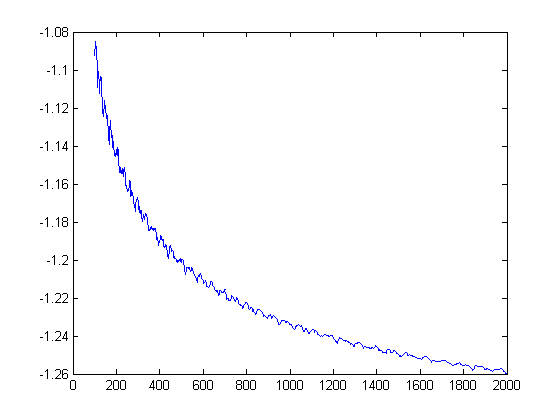}
   \caption{$AR_{N^+}(s)/\sqrt{s}$ from $(3.4')$ on $[100,2000]$}
\end{figure}
\section{Wave equation with velocity constant}
In this section we discuss the wave equation $\Box_{c} = -\frac{\partial^{2}}{\partial t ^{2}} + c^{2}\bigtriangleup _{x}$ on $S^{1} \times S^{2}$ and how its spectral asymptotics depends on the velocity constant $c$. The eigenfunctions and multiplicities are the same as the case $c = 1$ discussed in section 2, but the eigenvalues are $j^2 - c^2k(k+1) = j^2 - (ck + \frac{c}{2})^{2} + \frac{c^2}{4}$.

We consider the first case when $c$ is rational. We see immediately that if $c = \frac{2p}{2q+1}$ then there are infinitely many solutions of $(2q+1)j = (2k+1)p$ and so the eigenspace with eigenvalue $\frac{c^2}{4}$ has infinite multiplicity. Thus we restrict attention to the case $c = \frac{2p+1}{q}$ where $q$ may be even or odd but is relatively prime to $2p+1$. We define the eigenvalue counting functions $N_{c}^{+}(s)$ and $N_{c}^{-}(s)$ as before, with 

$(4.1)$ $N_c^+(s) = \sum (2k+1)$ on $0 < j^2 - c^2k(k+1) \leq s$ and 

$(4.2)$ $N_c^-(s) = \sum (2k+1)$ on $0 < c^2k(k+1) - j^2 \leq s$ in place of $(2.2)$ and $(2.3)$. Again, under our assumptions on c, the 0-eigenspace consists of just the constants. For simplicity we consider the first case when $q =1$, so $c = 2p + 1$.

\begin{lem}
$(4.3)$ $N_{2p+1}^{+}(s) = 2\sum\limits_{n=1}^{\lceil \sqrt{s} \rceil + p} \lfloor \frac{s-(n-p-1)^{2}}{(2p+1)(2n-1)}\rfloor ^2 + O(1)$
\end{lem}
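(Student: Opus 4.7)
The plan is to parallel the argument of Lemma 2.1, tracking how the factor of $(2p+1)$ enters each step. The positive-part condition $0 < j^2 - (2p+1)^2 k(k+1) \leq s$ is equivalent to the two-sided bound $(2p+1)\sqrt{k(k+1)} < |j| \leq \lfloor \sqrt{s + (2p+1)^2 k(k+1)} \rfloor$, so the main preliminary task is to pin down the lower endpoint in integer terms.

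A direct check shows that $(2p+1)k + p < (2p+1)\sqrt{k(k+1)} < (2p+1)k + p + \tfrac{1}{2}$ for every $k > p^2/(2p+1)$, and $(2p+1)\sqrt{k(k+1)}$ is irrational for $k \geq 1$. Hence the strict lower bound becomes $|j| \geq (2p+1)k + p + 1$ for all but finitely many $k$. The finitely many small-$k$ exceptions, together with the special case $k=0$ where the bound is simply $|j| \geq 1$, contribute to $N^+_{2p+1}(s)$ by an amount depending only on $p$, which will be absorbed into the $O(1)$ remainder.

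For each admissible $k \geq 1$, I would define the positive integer $n$ by $\lfloor \sqrt{s + (2p+1)^2 k(k+1)} \rfloor = (2p+1)k + n + p$, so that the number of positive admissible $j$ equals $n$ and each such $k$ contributes $2n(2k+1)$ to $N^+_{2p+1}(s)$. Writing $\sqrt{s + (2p+1)^2 k(k+1)} = (2p+1)k + n + p + \delta$ with $\delta \in [0,1)$ and solving for $k$ gives $k = \frac{s - (n+p+\delta)^2}{(2p+1)(2n-1+2\delta)}$, a decreasing function of $\delta$, so the integer values of $k$ producing a fixed $n$ fill the interval from $\lfloor \frac{s - (n+p+1)^2}{(2p+1)(2n+1)} \rfloor + 1$ up to $\lfloor \frac{s-(n+p)^2}{(2p+1)(2n-1)} \rfloor$. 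Applying $\sum_{k=a}^{b}(2k+1)=(b+1)^2-a^2$ and using the algebraic identities $(n+p)^2 - (2p+1)(2n-1) = (n-p-1)^2$ and $(n+p+1)^2 - (2p+1)(2n+1) = (n-p)^2$ rewrites the two shifted endpoints as $a_n := \lfloor \frac{s-(n-p-1)^2}{(2p+1)(2n-1)} \rfloor$ and $a_{n+1}$, giving the clean contribution $2n(a_n^2 - a_{n+1}^2)$ per $n$.

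A standard Abel summation then telescopes $\sum_n 2n(a_n^2 - a_{n+1}^2)$ into $2\sum_n a_n^2$, with the upper cutoff $\lceil\sqrt{s}\rceil + p$ chosen so that for $n$ just beyond it the numerator $s - (n-p-1)^2$ becomes non-positive and $|a_n|$ stays bounded by $1$. The main obstacle I expect is the bookkeeping of the aggregate $O(1)$ error: the small-$k$ exceptional cases, the $k=0$ special case, the spurious $a_n^2$ terms carried in the extended sum beyond the true $n_{\max}$, and the Abel-summation boundary term must all be shown to combine into a bounded quantity rather than something that grows with $s$, and this requires verifying cancellations at the edge of the summation range rather than bounding each piece separately.
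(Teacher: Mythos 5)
Your proposal is correct and follows essentially the same route as the paper: reduce the strict lower bound on $|j|$ to $(2p+1)k+p+1$ up to finitely many small-$k$ exceptions, define $n$ via the floor of $\sqrt{s+(2p+1)^2k(k+1)}$, solve for the $k$-range at fixed $n$, apply $\sum_{k=a}^{b}(2k+1)=(b+1)^2-a^2$ with the shift identities $(n+p)^2-(2p+1)(2n-1)=(n-p-1)^2$, and telescope. Your treatment is if anything slightly more explicit than the paper's (the correct threshold $k>p^2/(2p+1)$, the irrationality remark, and the stated identities), but the argument is the same.
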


\begin{proof}
As in the proof of Lemma 2.1 we observe that $0 < j^2 - (2p+1)^2k(k+1)$ is equivalent to $(2p+1)k+p+1 \leq |j|$ for $k \geq p^2$ and $(2p+1)k+p \leq |j|$ for $k < p^2$. Also $j^2 - (2p+1)^2k(k+1) \leq s$ is equivalent to $|j| \leq \lfloor \sqrt{s + (2p+1)^2k(k+1)}\rfloor$. So we define $n$ by 

$(4.4)$ $\lfloor \sqrt{s+ (2p+1)^2k(k+1)} \rfloor = (2p+1)k + p + n$ if $k \geq p^2$ or $(2p+1)k + p - 1 + n$ if $k < p^2$ and obtain 

$(4.5)$ $N_{2p+1}^{+}(s) = \sum 2n(2k+1)$ for the appropriate values of $n$ and $k$. Note that we may use the first formula in $(4.4)$ for all $k$ at the cost of an error of $O(1)$. For fixed $n$ we write $(4.4)$ as $\sqrt{s + (2p+1)^2k(k+1)} = (2p+1)k + p + n + \delta$ for $0 \leq \delta < 1$, which simplifies to $k = \frac{s-(n+p + \delta)^2}{(2p+1)(2n+2\delta - 1)}$. Thus the range of $k$ is 

$(4.6)$ $\lfloor \frac{s-(n+p+1)^2}{(2p+1)(2n + + 1)} + 1 \rfloor \leq k \leq \lfloor \frac{s-(n+p)^2}{(2p+1)(2n-1)} \rfloor$, so the sum over $k$ in $(4.5)$ yields $a_{n}^2 - a_{n+1}^2$ for $a_n = \lfloor \frac{s-(n-p-1)^2}{(2p+1)(2n-1)} \rfloor$. The rest of the proof is exactly as in Lemma 2.1.
\end{proof}

\begin{thm}
We have the asymptotic formula 

$(4.7)$ $N_{2p+1}^{+}(s) = \frac{1}{(2p+1)^2}(\frac{\pi^2}{4}s^2 - \frac{4}{3}s^{3/2})+ R_{N_{2p+1}^{+}}(s)$ with the remainder estimate 

$(4.8)$ $R_{N_{2p+1}^{+}}(s) = O(s \log s)$ as $s \rightarrow \infty$.
\end{thm}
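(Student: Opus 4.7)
The plan is to imitate closely the proof of Theorem 2.2, using Lemma 4.1 as the starting point. First, I would introduce the fractional remainders
\begin{equation*}
\left\lfloor \frac{s-(n-p-1)^2}{(2p+1)(2n-1)} \right\rfloor = \frac{s-(n-p-1)^2}{(2p+1)(2n-1)} - \eta_n,\qquad 0\le\eta_n<1,
\end{equation*}
and square, obtaining three contributions: the ``main'' quadratic piece, a cross term in $\eta_n$, and the term $2\sum\eta_n^2$. Exactly as in Theorem 2.2, the $\eta_n^2$ sum is $O(\sqrt{s})$ and the cross term
$-\,\frac{4}{2p+1}\sum_{n=1}^{\lceil\sqrt{s}\rceil+p}\frac{\eta_n(s-(n-p-1)^2)}{2n-1}$
is $O(s\log s)$ by brutally bounding $\eta_n\le 1$ and estimating the resulting harmonic-type sum. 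The $O(1)$ error already present in Lemma 4.1 is absorbed into the remainder.

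Next I would extract the leading asymptotics from the main piece
\begin{equation*}
\frac{2}{(2p+1)^2}\sum_{n=1}^{\lceil\sqrt{s}\rceil+p}\frac{\bigl(s-(n-p-1)^2\bigr)^2}{(2n-1)^2}.
\end{equation*}
Expanding $(s-(n-p-1)^2)^2=s^2-2s(n-p-1)^2+(n-p-1)^4$ and splitting the $s^2$ sum against its tail, the $s^2$-term contributes
$\frac{2s^2}{(2p+1)^2}\sum_{n=1}^\infty\frac{1}{(2n-1)^2}=\frac{\pi^2}{4(2p+1)^2}s^2$
plus a tail of order $\frac{s^2}{(2p+1)^2}\cdot\frac{1}{\sqrt{s}}$, which is $-\frac{1}{2(2p+1)^2}s^{3/2}+O(s^{1/2})$. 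The linear-in-$s$ term, after replacing $(n-p-1)^2/(2n-1)^2$ by $\tfrac14+O(1/n)$, produces $-\frac{1}{(2p+1)^2}s^{3/2}+O(s\log s)$, and the quartic term gives $\frac{1}{6(2p+1)^2}s^{3/2}+O(s)$. Summing these three $s^{3/2}$ coefficients,
\begin{equation*}
\frac{1}{(2p+1)^2}\Bigl(-\tfrac12-1+\tfrac16\Bigr)s^{3/2}=-\frac{4}{3(2p+1)^2}s^{3/2},
\end{equation*}
matches (4.7) exactly. All the inequalities needed are identical in shape to those in (2.13), just rescaled by the extra factor $\frac{1}{(2p+1)^2}$ and translated by $p$.

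The main obstacle, more bookkeeping than difficulty, is keeping track of the shift $n\mapsto n-p-1$ in the numerator together with the enlarged upper limit $\lceil\sqrt{s}\rceil+p$: I must verify that the extra $p$ terms in the range of summation (which are not present in Theorem 2.2) contribute only $O(s\log s)$ to each of the three pieces and do not spoil the coefficient extraction. Since each summand is bounded by $s/(2p+1)^2(2n-1)^2$ for $n$ near $\sqrt{s}$, and the number of extra terms is $O(p)=O(1)$, this contribution is in fact $O(1)\cdot s\cdot\frac{1}{s}=O(1)$, well within the error. With this care, combining all terms gives (4.7) and (4.8).
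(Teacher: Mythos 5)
Your proposal is correct and is essentially the paper's own argument: the paper proves Theorem 4.2 by declaring it "almost identical to the proof of Theorem 2.2," with the $(2p+1)^2$ in the denominator of $(4.3)$ producing the overall factor $\frac{1}{(2p+1)^2}$ and the shifts by $p$ absorbed into the remainder, which is exactly the decomposition and bookkeeping you carry out. Your coefficient extraction $-\tfrac12-1+\tfrac16=-\tfrac43$ and your check that the extra $O(p)$ terms in the summation range are negligible match the intended (and here made explicit) details.
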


\begin{proof}
The proof is almost identical to the proof of Theorem 2.2. The factor of $(2p+1)^2$ in the denominator of $(4.3)$ leads to the same factor in $(4.7)$. The appearance of $p$ in the numerator and in the upper sum limit only contributes to the remainder term.
\end{proof}
Figure 4.1 shows the graph of $R_{N_{2p+1}^{+}}(s)/s$ for $p=5$, and Figure 4.2 shows the graph of the average, also divided by $s$. These suggest that the error is $O(s)$, and that the limit as $s \rightarrow \infty$ of $AR_{N^+_{2p+1}}(s)/s$ exists.
\begin{figure}
	\hspace*{-.35in}
    \includegraphics[scale = .8]{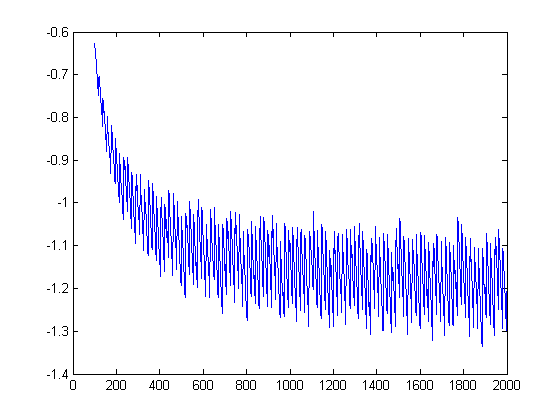}
   \caption{$R_{N_{2p+1}^{+}}(s)/s$ from $(4.7)$ on $[100,2000]$ with $p=5$}
\end{figure}
\begin{figure}
	\hspace*{-.35in}
    \includegraphics[scale = .8]{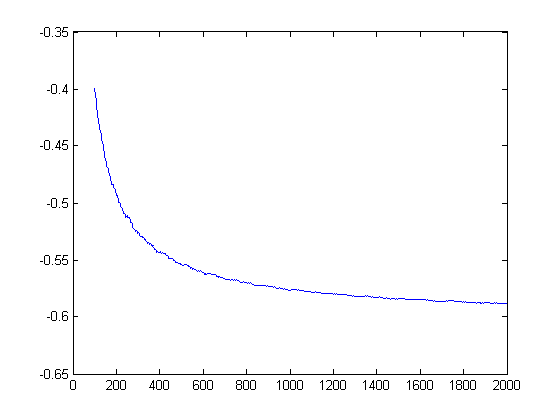}
   \caption{$AR_{N_{2p+1}^{+}}(s)/s$ from $(4.7)$ on $[100,2000]$ with $p = 5$}
\end{figure}
\begin{lem}
$(4.9)$ $N_{2p+1}^{-}(s) = \sum\limits_{k = 0}^{\lfloor \sqrt{\frac{s}{(2p+1)^2} + \frac{1}{4}} - \frac{1}{2}}((2p+1)2k + 1)(2k+1) + \sum\limits_{n=1}^{\lfloor \sqrt{s + \frac{1}{4}(2p+1)^2} + \frac{1}{2} \rfloor} 2\lfloor \frac{s + n^2}{(2p+1)(2n-1)} \rfloor ^2$
\end{lem}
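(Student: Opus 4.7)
The plan is to mirror the proof of Lemma 2.3, adapting for the velocity constant $c=2p+1$. Fix $k\geq 0$; the pair $(j,k)$ contributes to $N_{2p+1}^{-}(s)$ precisely when $\max(0,(2p+1)^2 k(k+1)-s)\leq j^2<(2p+1)^2 k(k+1)$, equivalently $\lceil\sqrt{\max(0,(2p+1)^2 k(k+1)-s)}\rceil\leq|j|\leq\lfloor(2p+1)\sqrt{k(k+1)}\rfloor$. I split into Case~I, where $(2p+1)^2k(k+1)\leq s$ (equivalently $k\leq K_0:=\lfloor\sqrt{s/(2p+1)^2+1/4}-1/2\rfloor$, matching the upper limit of the first sum in $(4.9)$), and Case~II where $k>K_0$.

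In Case~I the lower bound on $|j|$ is vacuous, giving $2\lfloor(2p+1)\sqrt{k(k+1)}\rfloor+1$ valid values of $j$. Since $(2p+1)k\leq(2p+1)\sqrt{k(k+1)}<(2p+1)k+p+\frac{1}{2}$ for $k\geq 1$, one may write $\lfloor(2p+1)\sqrt{k(k+1)}\rfloor=(2p+1)k+r(k)$ with $r(k)\in\{0,\ldots,p\}$. I decompose the count into a \emph{baseline} $2(2p+1)k+1$ (corresponding to $|j|\leq(2p+1)k$) plus an \emph{excess} $2r(k)$ (the remaining top $|j|$'s). Summing $\text{baseline}\cdot(2k+1)$ over $0\leq k\leq K_0$ yields the first sum of $(4.9)$; the excess will be folded into the second sum.

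For Case~II (together with the Case~I excess), both contributions count $|j|$'s in a top band. Following the proof of Lemma 2.3, introduce an integer $n$ tracking the depth of the admissible $|j|$-band below $\lfloor(2p+1)\sqrt{k(k+1)}\rfloor+1$: concretely, write $(2p+1)\sqrt{k(k+1)-s/(2p+1)^2}=(2p+1)k+p-n+\delta$ for $\delta\in[0,1)$, square, and solve for $k$. This expresses $k$ as a rational function of $n$ and $\delta$ with denominator proportional to $(2p+1)(2n-1+2\delta)$. Taking floors yields, for each fixed $n$, the range $\lfloor\frac{s+n^2}{(2p+1)(2n+1)}\rfloor+1\leq k\leq\lfloor\frac{s+(n-1)^2}{(2p+1)(2n-1)}\rfloor$. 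Summing $2n(2k+1)$ over this $k$-range via $\sum_{k=a}^{b}(2k+1)=(b+1)^2-a^2$, then applying Abel summation $\sum_n 2n(a_n^2-a_{n+1}^2)=2\sum_n a_n^2$ (plus vanishing boundary), yields the second sum of $(4.9)$. The upper limit $n\leq\lfloor\sqrt{s+(2p+1)^2/4}+1/2\rfloor$ arises from requiring the $k$-range to be nonempty.

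The main obstacle is the bookkeeping needed to merge the Case~I excess with the Case~II contribution into a single clean $n$-indexed sum, with matching floor expressions and no stray boundary terms. In particular, the transition at $k=K_0$, where the lower constraint on $|j|$ first becomes binding, requires the fractional parts of $(2p+1)\sqrt{k(k+1)}$ and of $\sqrt{(2p+1)^2 k(k+1)-s}$ to align coherently; and the non-monotonicity of $k(\delta)$ already present in the proof of Lemma 2.3 must be shown to contribute only $O(1)$ errors absorbed by the floor notation.
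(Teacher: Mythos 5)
Your setup diverges from the paper's at the very first step, and the divergence is substantive. The paper's proof transplants Lemma 2.3 verbatim: it asserts that $j^2 \leq (2p+1)^2k(k+1)$ is equivalent to $|j| \leq (2p+1)k$, sets $n = \lfloor (2p+1)k+1-\sqrt{(2p+1)^2k(k+1)-s}\rfloor$, and solves for $k$ to obtain $(4.11)$. You instead observe, correctly, that the condition is $|j| \leq \lfloor (2p+1)\sqrt{k(k+1)}\rfloor$, which for all large $k$ equals $(2p+1)k+p$, so there is an extra band of $p$ values of $|j|$ that the paper's inequality discards; your ``excess'' is exactly this band. The trouble is that the two steps you defer are precisely where the argument breaks. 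First, the $k$-range you display does not follow from your own substitution: squaring $(2p+1)\sqrt{k(k+1)-s/(2p+1)^2}=(2p+1)k+p-n+\delta$ and using $(2p+1)-2p=1$ gives $(2p+1)k(2n+1-2\delta)=s+(n-p-\delta)^2$, hence $k=\frac{s+(n-p-\delta)^2}{(2p+1)(2n+1-2\delta)}$; the resulting endpoints carry numerators $(n-p)^2$ and $(n-p-1)^2$, not the $n^2$ and $(n-1)^2$ appearing in $(4.9)$, and the denominator runs from $(2p+1)(2n+1)$ down to $(2p+1)(2n-1)$ rather than as you claim. Second, the promised ``folding of the Case I excess into the second sum'' is never carried out, and it cannot be carried out as an exact identity, because the excess is a genuinely separate contribution with its own floor structure.

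These are not $O(1)$ discrepancies that the floor notation silently absorbs: if you push your (correct) count through to the end you do not land on the right-hand side of $(4.9)$. A direct check at $p=1$, $s=20$ gives $N_{3}^{-}(20)=157$ by enumeration, while the right-hand side of $(4.9)$ evaluates to $22+112=134$; the gap grows with $s$ (it comes from the $(n-p)^2$ versus $n^2$ shift in the numerators, from the excess band, and from the boundary term in the Abel summation when the $k$-range is truncated at the Case I/II interface), although it remains within the $O(s\log s)$ error that Theorem 4.6 ultimately tolerates. So the realistic conclusion of your approach is a formula of the shape $(4.9)$ but with shifted floor arguments and an explicit $O(s\log s)$ correction, in the spirit of the $O(1)$ allowance in $(4.3)$ --- not the exact identity as stated. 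To complete a proof you must either prove that your floor expressions coincide with those in $(4.9)$ (they do not in general), or restate the lemma with an error term and check that this weaker form still yields Theorem 4.6.
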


\begin{proof}
As in the proof of Lemma 2.3, we note that $j^2 \leq (2p+1)^2k(k+1)$ is equivalent to $|j| \leq (2p+1)k$. To analyze the condition $j^2 \geq (2p+1)^2k(k+1) - s$ we consider two cases: 
\begin{enumerate}[I.]
\item
If $s \geq (2p+1)^2k(k+1)$ the condition is always satisfied, so there are exactly $(2p+1)2k + 1$ values of $j$ satisfying $|j| \leq (2p+1)k$, so this case contributes the first sum to $(4.9)$.
\item
If $s < (2p+1)^2k(k+1)$, then the conditions are $\lceil \sqrt{(2p+1)^2k(k+1) - s} \rceil \leq |j| \leq (2p+1)k$, so the total number of such $j$ is $2\lfloor (2p+1)k + 1 - \sqrt{(2p+1)^2k(k+1) - s} \rfloor$. The condition $\sqrt{(2p+1)^2k(k+1) - s} \leq (2p+1)k$ means $k \leq \frac{s}{(2p+1)^2}$, while $s < (2p+1)^2k(k+1)$ means $[ \sqrt{\frac{s}{(2p+1)^2} + \frac{1}{4}} + \frac{1}{2}] \leq k$, so the total contribution to $N_{2p+1}^{-}(s)$ of these terms is 

$(4.10)$ $\sum\limits_{k = [\sqrt{\frac{s}{(2p+1)^2} + \frac{1}{4}} + \frac{1}{2}]}^{\frac{s}{(2p+1)^2}} 2\lfloor (2p+1)k+1 - \sqrt{(2p+1)^2k(k+1)-s}\rfloor(2k+1)$
\end{enumerate}
Now we define $n = \lfloor (2p+1)k+1 - \sqrt{(2p+1)^2k(k+1) - s}\rfloor$ so that 

$n = (2p+1)k+1 - \sqrt{(2p+1)^2k(k+1) - s} - \delta$ for $0 \leq \delta < 1$, and solve to obtain

$(4.11)$ $k = \frac{s+(n-1+\delta)^2}{(2p+1)(2n-1+2\delta)}$ which differs from $(2.16)$ only by the factor $(2p+1)$ in the denominator. So the range of $k$ is $\lfloor \frac{s + n^2}{(2p+1)(2n+1)}\rfloor + 1 \leq k \leq \lfloor \frac{s + (n-1)^2}{(2p+1)(2n+1)}\rfloor$. Thus $(4.10)$ is equal to the second sum in $(4.9)$. 
\end{proof}

\begin{thm}
We have the asymptotic formula 

$(4.12)$ $N_{2p+1}^-(s) = \frac{1}{(2p+1)^2}\frac{\pi^2}{4}s^2 + \frac{2}{(2p+1)^2}s^{3/2} + R_{N_{2p+1}^{-}}(s)$ with the remainder estimate 

$(4.13)$ $R_{N_{2p+1}^{-}}(s) = O(s\log s)$ as $s \rightarrow \infty$.
\end{thm}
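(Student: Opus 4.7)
The plan is to mimic the proof of Theorem 2.4 but with the $(2p+1)$ factors carefully tracked, since the arithmetic is essentially the same up to a global rescaling by $(2p+1)^{-2}$. First I would split $N_{2p+1}^-(s)$ via Lemma 4.3 into the two sums
\begin{align*}
S_1(s) &= \sum_{k=0}^{K}\bigl((2p+1)2k+1\bigr)(2k+1),\qquad K=\Bigl\lfloor\sqrt{\tfrac{s}{(2p+1)^2}+\tfrac14}-\tfrac12\Bigr\rfloor,\\
S_2(s) &= 2\sum_{n=1}^{N}\Bigl\lfloor\tfrac{s+n^2}{(2p+1)(2n-1)}\Bigr\rfloor^{\!2},\qquad N=\Bigl\lfloor\sqrt{s+\tfrac{(2p+1)^2}{4}}+\tfrac12\Bigr\rfloor.
\end{align*}

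For $S_1$, since $K=\sqrt{s}/(2p+1)+O(1)$, I expand $((2p+1)2k+1)(2k+1)=(2p+1)(4k^{2}+2k)+(2k+1)$ and use the elementary formulas for $\sum k^{2}$ and $\sum k$ to obtain
\[
S_1(s)=(2p+1)\cdot\tfrac{4}{3}K^{3}+O(K^{2})=\tfrac{4}{3(2p+1)^{2}}s^{3/2}+O(s).
\]
This accounts for the $\tfrac{4}{3}$ piece of the coefficient of $s^{3/2}$.

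Next I would handle $S_2$. Write $\lfloor\tfrac{s+n^2}{(2p+1)(2n-1)}\rfloor=\tfrac{s+n^2}{(2p+1)(2n-1)}-\eta_n$ with $\eta_n\in[0,1)$, and square to get
\[
S_2(s)=\tfrac{2}{(2p+1)^{2}}\sum_{n=1}^{N}\tfrac{(s+n^{2})^{2}}{(2n-1)^{2}}-\tfrac{4}{2p+1}\sum_{n=1}^{N}\tfrac{\eta_{n}(s+n^{2})}{2n-1}+2\sum_{n=1}^{N}\eta_{n}^{2}.
\]
The last two sums are $O(s\log s)$ and $O(\sqrt{s})$ respectively, exactly as in the proof of Theorem 2.2. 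I then expand $(s+n^{2})^{2}=s^{2}+2sn^{2}+n^{4}$ and treat each of the three sums separately. The $s^{2}$-sum produces the main term $\tfrac{\pi^{2}}{4(2p+1)^{2}}s^{2}$ (from $\sum_{n\ge 1}(2n-1)^{-2}=\pi^{2}/8$), minus a tail $\sum_{n>N}(2n-1)^{-2}\sim\tfrac{1}{4\sqrt{s}}$, contributing $-\tfrac{1}{2(2p+1)^{2}}s^{3/2}$. The $2sn^{2}$-sum, using $\tfrac{n^{2}}{(2n-1)^{2}}=\tfrac14+O(1/n)$, gives $\tfrac{1}{(2p+1)^{2}}s^{3/2}+O(s\log s)$. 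The $n^{4}$-sum, via $\tfrac{n^{4}}{(2n-1)^{2}}=\tfrac{n^{2}}{4}+O(n)$, yields $\tfrac{1}{6(2p+1)^{2}}s^{3/2}+O(s)$.

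Finally I would add everything up and check the arithmetic $\tfrac{4}{3}-\tfrac{1}{2}+1+\tfrac{1}{6}=2$, which delivers the stated leading behavior $\tfrac{\pi^{2}}{4(2p+1)^{2}}s^{2}+\tfrac{2}{(2p+1)^{2}}s^{3/2}$ together with the remainder bound $O(s\log s)$. The main bookkeeping obstacle, as in Theorem 2.4, is controlling the $\eta_n$-dependent cross-term $\sum_n\tfrac{\eta_n(s+n^{2})}{2n-1}$: the piece $s\sum_n\tfrac{\eta_n}{2n-1}$ is $O(s\log s)$ only because $|\eta_n|<1$, and this is the only place where the bound (4.13) is not improvable without further information on the fractional parts $\eta_n$. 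Everything else reduces to careful harmonic-series estimates identical in spirit to those in Section 2, with the factor $(2p+1)$ entering uniformly through the denominator of the floor function in Lemma 4.3.
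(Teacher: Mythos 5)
Your proposal is correct and takes essentially the same route as the paper: the paper's proof of this theorem is a one-line reference to Theorem 2.4 (itself a reference to Theorem 2.2, recording the bookkeeping $2=\tfrac{4}{3}-\tfrac{1}{2}+1+\tfrac{1}{6}$), and your argument is precisely that computation carried out explicitly with the $(2p+1)$ factors tracked. The only value you add is writing out the details the paper omits; the decomposition, the $\eta_n$ device, and the source of each piece of the $s^{3/2}$ coefficient all match.
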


\begin{proof}
The proof is the same as Theorem 2.4 except for the factor $(2p+1)^2$ in the denominator.
\end{proof}
In Figure 4.3 we show the graph of $R_{N_{2p+1}^{-}}(s)/s$ and in Figure 4.4 the average for $p=1$. These are similar to Figures 2.3 and 2.4.
\begin{figure}
	\hspace*{-.35in}
    \includegraphics[scale = .8]{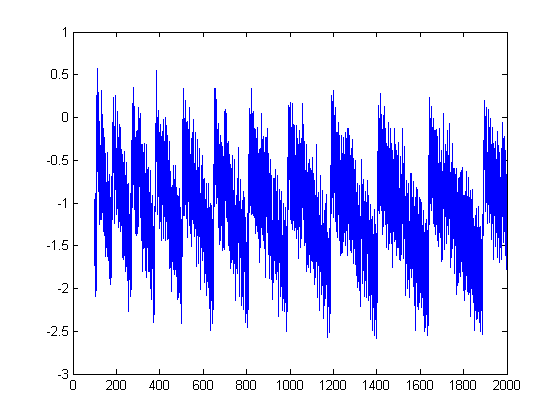}
   \caption{$R_{N_{2p+1}^{-}}(s)/s$ from $(4.12)$ on $[100,2000]$ with $p = 1$}
\end{figure}
\begin{figure}
	\hspace*{-.35in}
    \includegraphics[scale = .8]{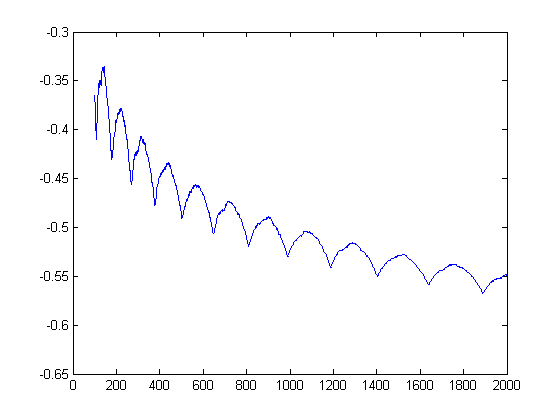}
   \caption{$AR_{N_{2p+1}^{-}}(s)/s$ from $(4.12)$ on $[100,2000]$ with $p = 1$}
\end{figure}

Similar reasoning shows that for $c = \frac{2p+1}{q}$ we have 

$(4.14)$: 

$N_c^+(s) = \frac{1}{c^2}(\frac{\pi^2}{4}s^2 - \frac{4}{3}s^{3/2}) + R_{N_c^+}(s)$

$N_c^-(s) = \frac{1}{c^2}(\frac{\pi^2}{4}s^2 + 2s^{3/2}) + R_{N_c^-(s)}$, both with the same error estimate.

Finally we consider the case of irrational values of $c$. Suppose $\left\{b_0, b_1, b_2...\right\}$ are the coefficients of the continued fraction expansion of $c$, and $\frac{p_i}{q_i}$ the associated rational approximations of $c$. In particular, we know 

$(4.15)$ $|\frac{p_i}{q_i} - c| < \frac{1}{q_iq_{i+1}}$. 

From the previous discussion it is clear that the essential issue is whether or not the numerators $p_i$ are even. Indeed if $p_i = 2p_i'$ is even, so $q_i = 2q_i' + 1$ is odd, then the choice $j = p'$, $k = q'$ leads to $j^2 - c^2k(k+1) = j^2 - (\frac{p_i}{q_i})^2k(k+1) + O(1)$ by $(4.15)$ and $j^2 - (\frac{p_i}{q_i})^2k(k+1) = (\frac{p_i'}{2q_i' +1})^2$ so there is a uniform bound for the eigenvalue. Thus if there are infinitely many even $p_i'$s, then there are infinitely many eigenvalues in a bounded region. It is easy to see that this is the generic case, because if $p_{i-2}$ and $p_{i-1}$ are odd, then the choice of $b_i$ odd leads to $p_i = b_ip_{i-1} + p_{i-2}$ even.

There remains the exceptional case when all but  a finite number of the $b_i'$s are even. In half of these cases, all but a finite number of the $p_i$ will be odd (for example, if $b_0$ is even and all $b_i$ for $i \geq 1$ are even). It is plausible that there are some such examples where the estimate $(4.14)$ holds, especially if the coefficients $b_i$ grow rapidly so the approximations $(4.15)$ are very close. However, to actually prove this would be rather delicate since it would require careful control of the error terms $(4.8)$ and $(4.13)$. We will not attempt to do this here.
\section{Wave equation on $S^p \times S^q$}
In this section we discuss the wave equation $\Box = -\bigtriangleup_x + \bigtriangleup_y$ on products of spheres $x \in S^p$ and $y \in S^q$ where $p$ is odd and $q$ is even. The eigenfunctions are products of spherical harmonics $Y_j^p(x)Y_k^q(y)$ with eigenvalues $j(j+p-1) - k(k+q-1) = (j+\frac{p-1}{2})^2 - (k+\frac{q-1}{2})^2 - (\frac{p-1}{2})^2 + (\frac{q-1}{2})^2$. The parity assumptions easily imply that the 0-eigenspace is finite dimensional and $N^+(s)$ and $N^-(s)$ are always finite. The dimension of the space of spherical harmonics $Y_k^q$ is 

$(5.1)$ $	\binom{q+k}{k} - \binom{q - 2 +k}{k-2} = \frac{2k+q-1}{q-1} \binom{k+q-2}{k} = \frac{2}{(q-1)!}k^{q-1} + \frac{q-1}{(q-2)!}k^{q-2} + O(k^{q-3})$, and a similar formula for $Y_j^p$ except when $p=1$. So, when $p = 1$ we have

$(5.2)$ $N^+(s)= \sum(\frac{2k+q-1}{q-1})\binom{k+q-2}{k}$ over all $j \in \mathbb{Z}$ and $k \geq 0$ satisfying 

$0 < j^2 - k(k+q-1) \leq s$, 

$(5.3)$ $N^-(s) = \sum (\frac{2k+q-1}{q-1})\binom{k+q-2}{k}$ over all $j \in \mathbb{Z}$ and $k \geq 0$ satisfying 

$0 < k(k+q-1)-j^2 \leq s$. When $p \geq 3$ we have 

$(5.4)$ $N^+(s) = \sum(\frac{2j+p-1}{p-1}\binom{j+p-2}{j}(\frac{2k+q-1}{q-1})\binom{k+q-2}{k}$ over all $j \geq 0$, $k \geq 0$ satisfying $0 < j(j+p-1) - k(k+q-1) \leq s$, 

$(5.5)$ $N^-(s) = \sum(\frac{2j+p-1}{p-1})\binom{j+p-2}{j}(\frac{2k+q-1}{q-1})\binom{k+q-2}{k}$ over all $j \geq 0$, $k \geq 0$ satisfying $0 < k(k+q-1) - j(j+p-1) \leq s$.

\begin{lem}
Suppose $p = 1$ and $q$ is even. Then

$(5.6)$ $N^+(s) = \frac{4}{q!}\sum\limits_{n=1}^{\lfloor \sqrt{s} \rfloor - \frac{q}{2} + 1}\lfloor \frac{s-(n+q/2 -1)^2}{2n-1} \rfloor ^q + \frac{2q}{(q-1)!}\sum\limits_{n=1}^{\lfloor \sqrt{s} \rfloor - \frac{q}{2} + 1} \lfloor \frac{s - (n+q/2-1)^2}{2n-1} \rfloor^{q-1} + O(s^{q-2})$
\end{lem}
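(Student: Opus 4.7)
The plan is to follow the strategy of Lemma 2.1 closely, replacing the degree-$k$ multiplicity $2k+1$ on $S^2$ by the general multiplicity $d_k := \frac{2k+q-1}{q-1}\binom{k+q-2}{k}$ appearing in (5.2), and accommodating the shift $k(k+q-1) = (k+\frac{q-1}{2})^2 - (\frac{q-1}{2})^2$. Because $q$ is even, $k+\frac{q-1}{2}$ is a half-integer, and an elementary comparison shows that provided $k > (q/2-1)^2$ the two conditions $0<j^2-k(k+q-1)\leq s$ are equivalent to $k+q/2 \leq |j| \leq \lfloor\sqrt{s+k(k+q-1)}\rfloor$. For the finitely many smaller $k$ there are at most $O(1)$ additional admissible $|j|$'s, each multiplied by a bounded $d_k$, so their total contribution is $O(1)$ and can be absorbed in the $O(s^{q-2})$ remainder.

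Next, paralleling (2.5)--(2.7), I define $n$ by $\lfloor\sqrt{s+k(k+q-1)}\rfloor = k+q/2+n-1$, so that for each admissible $k$ there are exactly $n$ values of $|j|$ and hence $2n$ values of $j$, giving
\begin{equation*}
N^+(s) = \sum 2n\, d_k + O(1).
\end{equation*}
Fixing $n$ and writing $\sqrt{s+k(k+q-1)} = k+q/2+n-1+\delta$ with $\delta \in [0,1)$, one solves to obtain $k = \frac{s-(n+q/2-1+\delta)^2}{2n-1+2\delta}$, a decreasing function of $\delta$. Reading off the endpoints yields the range $\tilde a_{n+1}+1 \leq k \leq \tilde a_n$, where
\begin{equation*}
\tilde a_n := \Bigl\lfloor \tfrac{s-(n+q/2-1)^2}{2n-1} \Bigr\rfloor,
\end{equation*}
and the upper summation limit $n \leq \lfloor\sqrt s\rfloor - q/2 + 1$ comes from requiring $\tilde a_n \geq 0$.

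The crucial algebraic step is the identity
\begin{equation*}
d_k = \binom{k+q-1}{q-1} + \binom{k+q-2}{q-1},
\end{equation*}
obtained by writing $2k+q-1 = (k+q-1)+k$. Applying the hockey-stick identity to each binomial then gives $S(K) := \sum_{k=0}^{K} d_k = \binom{K+q}{q} + \binom{K+q-1}{q}$, so the inner sum over $k$ equals $S(\tilde a_n)-S(\tilde a_{n+1})$. Abel summation then collapses the $2n$-weighted outer sum to
\begin{equation*}
N^+(s) = \sum_{n=1}^{\lfloor\sqrt s\rfloor - q/2+1} 2\, S(\tilde a_n) + O(1),
\end{equation*}
the boundary contribution vanishing because $S$ evaluates to zero once the admissible range empties.

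Finally, expanding the two binomial polynomials in $K$ gives $S(K) = \frac{2}{q!} K^q + \frac{q}{(q-1)!} K^{q-1} + O(K^{q-2})$ for large $K$. Substituting this into the previous display reproduces the two main sums in (5.6), and the residual contribution is bounded by $\sum_n O(\tilde a_n^{q-2}) = O\bigl(s^{q-2}\sum_{n\geq 1}(2n-1)^{-(q-2)}\bigr) = O(s^{q-2})$, the series converging for $q\geq 4$. The main obstacle I anticipate is the combinatorial step: identifying the right closed form for $S(K)$ and verifying the Abel rearrangement once the weight $2n$ no longer matches the index of a clean telescoping sum of the kind used in Section 2. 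Once that identity is in hand, the remaining bookkeeping is essentially the same as in the proof of Lemma 2.1.
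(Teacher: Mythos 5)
Your argument is correct and follows the paper's proof of Lemma 5.1 essentially step for step: the same reduction to counting $|j|$ in the window $k+q/2 \leq |j| \leq \lfloor\sqrt{s+k(k+q-1)}\rfloor$, the same inversion fixing $n$ and solving for the $k$-range, and the same telescoping of the $2n$-weighted sum against the partial-sum polynomial of the multiplicities. The only (harmless) differences are that you exhibit the exact closed form $S(K)=\binom{K+q}{q}+\binom{K+q-1}{q}$ via the hockey-stick identity where the paper simply names this polynomial $Q_q$ and reads off its two leading coefficients from $(5.1)$, and that you are slightly more careful than the paper about the finitely many small $k$ for which the lower bound $|j|\geq k+q/2$ needs adjusting.
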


\begin{proof}
The condition $j^2 > k(k+q-1)$ is equivalent to $|j| \geq k + \frac{q}{2}$, while the condition $j^2 \leq k(k+q-1)+s$ is equivalent to $|j| \leq \lfloor \sqrt{k(k+q-1) + s} \rfloor$. Thus for each $k$ the total number of $j$ is $2(\lfloor \sqrt{k(k+q-1) + s} \rfloor - k - q/2 + 1)$, and for this to be nonzero we must have $k \leq s - (\frac{q}{2})^2$. So $(5.2)$ becomes

$(5.7)$ $N^+(s) = \sum\limits_{k=0}^{s-(\frac{q}{2})^2} 2(\lfloor \sqrt{k(k+q-1) + s} - k - \frac{q}{2} + 1)(\frac{2k+q-1}{q-1})\binom{k+q-2}{k}$. 

Now we can define $n$ by $\sqrt{k(k+q-1)+s} = n + k + q/2 - 1 + \delta$ with $0 \leq \delta < 1$.

We solve this for $k$ to obtain $k = \frac{s-(n-1 + q/2 + \delta)^2}{2n-1 + 2\delta}$ so for fixed $n$ we have $k$ in the range 

$(5.8)$ $\lfloor \frac{s-(n+\frac{q}{2})^2}{2n+1} \rfloor \leq k \leq \lfloor \frac{s - (n+q/2 - 1)^2}{2n-1} \rfloor$.

To find the sum over $k$ we define the polynomial $Q_q(m)$ of degree $q$ by the equation

$(5.9)$ $\sum\limits_{k=0}^{m}(\frac{2k+q-1}{q-1})\binom{k+q-2}{k} = Q_q(m)$. Then $(5.7)$ becomes

$(5.10)$ $N^+(s) = \sum\limits_{n=1}^{\lfloor \sqrt{s} \rfloor - \frac{q}{2} +1}2nQ_q(\lfloor \frac{s-(n+q/2 - 1)^2}{2n-1} \rfloor) - Q_q( \lfloor \frac{s - (n + q/2)^2}{2n+1} \rfloor) = \sum\limits_{n=1}^{\lfloor \sqrt{s} \rfloor - \frac{q}{2} +1}2Q_q(\lfloor \frac{s - (n + q/2 - 1)^2}{2n-1} \rfloor )$.

To identify $(5.6)$ with $(5.10)$ we just have to compute the two leading terms of $Q_q$ from $(5.1)$, namely $\sum\limits_{k=0}^{m}(\frac{2}{(q-1)!}k^{q-1} + \frac{q-1}{(q-2)!}k^{q-2}) = \frac{2}{q!}m^q + \frac{1}{(q-1)!}m^{q-1} + \frac{1}{(q-2)!}m^{q-1} + O(m^{q-2})$.
\end{proof}
\begin{thm}
Suppose $p=1$ and $q\geq 4$ is even. Then 

$(5.11)$ $N^+(s) = \frac{4}{q!}(1-2^{-q})\zeta(q)s^q + O(s^{q-1})$.
\end{thm}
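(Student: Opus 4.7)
The plan is to start from the explicit formula (5.6) and extract the leading $s^q$ contribution. Setting $x_n = (s-(n+q/2-1)^2)/(2n-1)$ and $N = \lfloor\sqrt{s}\rfloor - q/2 + 1$, the claim reduces to showing
$$\sum_{n=1}^{N} x_n^q = (1-2^{-q})\zeta(q)\, s^q + O(s^{q-1}),$$
and that the second sum in (5.6), together with its built-in $O(s^{q-2})$ term, are both $O(s^{q-1})$.

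First I would remove the floor functions. Writing $\lfloor x_n\rfloor^q = x_n^q + O(x_n^{q-1})$ for $x_n \geq 1$ (and treating $x_n < 1$ trivially, since then the term is bounded), the total floor-correction is $O(\sum_n x_n^{q-1})$; analogously the second sum of (5.6) equals $\frac{2q}{(q-1)!}\sum_n x_n^{q-1}$ up to an error of order $\sum_n x_n^{q-2}$. So everything depends on sharp estimates of $\sum_n x_n^k$ for $k = q$ and $k = q-1$.

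Next I would expand via the binomial theorem,
$$x_n^k = \frac{1}{(2n-1)^k}\sum_{j=0}^{k}\binom{k}{j}(-1)^j (n+q/2-1)^{2j}\, s^{k-j},$$
and identify the leading term $s^k \sum_{n=1}^{N}(2n-1)^{-k}$. For $k \geq 2$ this partial series converges to $(1-2^{-k})\zeta(k)$ with tail $O(N^{1-k})$. The contributions from $j \geq 1$ are controlled by splitting the outer sum at $n_0 = s^{1/2-\epsilon}$: for $n \leq n_0$, partial sums of $n^{2j}/(2n-1)^k$ grow at most as $O(N^{\max(0,\,2j-k+1)})$ (with possible log), while for $n_0 < n \leq N$ each term is bounded by $(s/n_0)^k$ with at most $\sqrt{s}$ summands. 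The upshot is
$$\sum_{n=1}^{N} x_n^k = (1-2^{-k})\zeta(k)\, s^k + O(s^{(k+1)/2})$$
for every integer $k \geq 2$. Setting $k=q$ gives the main term (using $(q+1)/2 \leq q-1$, which holds for $q \geq 3$), while $k = q-1$ gives $\sum_n x_n^{q-1} = O(s^{q-1})$, the estimate needed to dispose of the floor corrections and the second sum in (5.6).

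The main obstacle is the bookkeeping of multiple error sources, each of which must land inside $O(s^{q-1})$: (i) the tail of the zeta series beyond $N$, (ii) the $q$ binomial correction terms indexed by $j \geq 1$, (iii) the tail range $n_0 < n \leq N$ of the outer sum, and (iv) the secondary sums $\sum_n x_n^{q-1}$ and $\sum_n x_n^{q-2}$ that arise from removing floors and from the second sum in (5.6). All four constraints conspire to impose $q \geq 4$ (so that $(q+1)/2 \leq q-1$ with room to spare), which matches the hypothesis of the theorem exactly; the lower powers $k = q-1$ and $k = q-2$ require $q-1 \geq 2$, which is again satisfied for $q \geq 4$. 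The convergence of the zeta series $\sum (2n-1)^{-q}$ to $(1-2^{-q})\zeta(q)$ is what ultimately produces the stated constant $\frac{4}{q!}(1-2^{-q})\zeta(q)$.
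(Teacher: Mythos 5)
Your proposal follows essentially the same route as the paper's proof: strip the floors via $\lfloor x_n\rfloor = x_n - \eta_n$, expand $x_n^q$ binomially to isolate the leading term $s^q\sum_n(2n-1)^{-q}$, identify that partial sum with $(1-2^{-q})\zeta(q)$ up to a tail of order $s^{(1-q)/2}$, and verify that the $j\ge 1$ binomial corrections, the floor corrections, and the second sum in $(5.6)$ all fit inside $O(s^{q-1})$. One small caveat: your displayed estimate $\sum_n x_n^k = (1-2^{-k})\zeta(k)s^k + O(s^{(k+1)/2})$ is overstated for $k\ge 4$, since the $j=1$ binomial term alone contributes $-k\bigl(\sum_n (n+q/2-1)^2(2n-1)^{-k}\bigr)s^{k-1} = \Theta(s^{k-1})$ and $(k+1)/2 < k-1$ there; the correct bound is $O\bigl(s^{k-1}\bigr)$, which is exactly what you go on to use, so the theorem's conclusion is unaffected.
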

\begin{proof}
Write $\lfloor \frac{s-(n + \frac{q}{2} - 1)^2}{2n-1} \rfloor = \frac{s-(n+\frac{q}{2} - 1)^2}{2n-1} - \eta_n$ with $0 \leq \eta_n < 1$. Then

$(5.12)$ $\lfloor \frac{s-(n+\frac{q}{2}-1)^2}{2n-1} \rfloor^q = \frac{s^q}{(2n-1)^q} + \frac{qs^{q-1}}{(2n-1)^q}((n + \frac{q}{2} - 1)^2 + \eta_n(2n-1)) + O(s^{q-2})$.

Since $q \geq 4$ we obtain from $(5.6)$ and $(5.12)$ that $N^+(s) = \frac{4}{q!} \sum\limits_{n=1}^{\lfloor \sqrt{s} \rfloor - \frac{q}{2} +1} \frac{s^q}{(2n-1)^q} + O(s^{q-1}) = \frac{4}{q!}((1 - 2^{-q})\zeta(q)s^q - \sum\limits_{n = \lfloor \sqrt{s} \rfloor - \frac{q}{2} + 2}^{\infty} \frac{s^q}{(2n-1)^q}) + O(s^{q-1})$ and the last sum is $O(s^{q-\frac{q-1}{2}})$, which is $O(s^{q-1})$ since $q \geq 4$. 
\end{proof}
Figure 5.1 shows the graph of $R_{N^+}(s)/s$ for $p=1$, $q=4$, suggesting strongly that the limit as $s \rightarrow \infty$ does not exist. However, our data suggests that for the average $AR_{N^+}(s)$ the limit of $AR_{N^+}(s)/s^3$ does exist and is approximately equal to $-.347275$. Moreover, $AR_{N^+}(s) + .347275s^3$ appears to have a growth rate of $s^{\beta}$ for $\beta$ approximately equal to $2.6976$. Figure 5.2 shows the graph of $(AR_{N^+}(s) + .347275s^3)/s^{2.6976}$. This suggests that it might be possible to include a term of order $s^{q-1}$ in $(5.11)$ so

$(5.11')$ $N^+(s) = \frac{4}{q!}(1-2^{-q})\zeta(q)s^q + c(q)s^{q-1} + \tilde{R}_{N^+}(s)$ and $A\tilde{R}_{N^+}(s) = O(s^{q-1 - \delta(q))}$ for some choice of $c(q)$ and $\delta(q) > 0$. (So for $q=4$ we have approximately $c(4) \approx 4(.347275)$ and $\delta(q) \approx .3024$).) Note that $\tilde{R}_{N^+}(s)$ would only be $O(s^{q-1})$, and only by averaging would the $s^{q-1}$ term make a difference.   
\begin{figure}
	\hspace*{-.35in}
    \includegraphics[scale = .8]{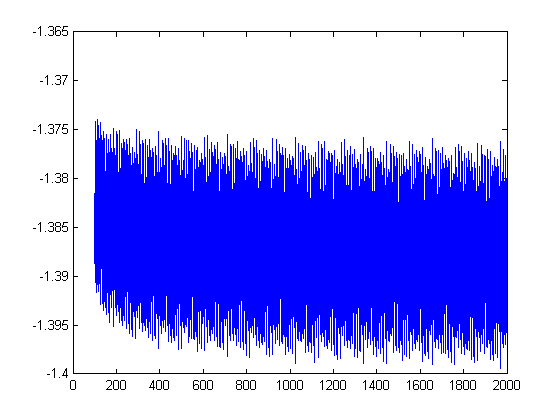}
   \caption{$R_{N^{+}}(s)/s^3$ from $(5.11)$ on $[100,2000]$ with $p = 1$ and $q = 4$}
\end{figure}
\begin{figure}
	\hspace*{-.35in}
    \includegraphics[scale = .8]{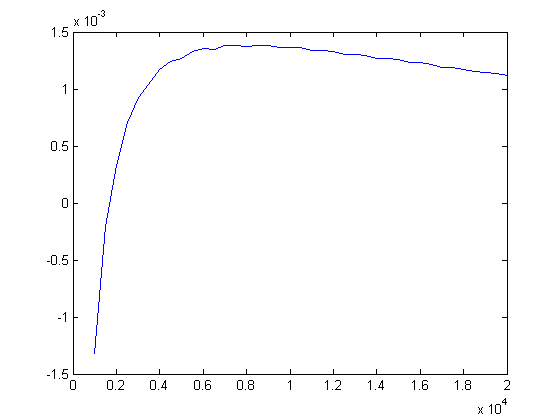}
   \caption{$(AR_{N^{+}}(s) + .347275*s^3)/s^{2.6976}$ from $(5.11)$ on $[1000,20000]$ with $p = 1$ and $q = 4$}
\end{figure}
\begin{thm}
Suppose $p=1$ and $q \geq 4$ is even. Then $N^-(s)$ satisfies the same estimate $(5.11)$ as $N^+(s)$.
\end{thm}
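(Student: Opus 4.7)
The plan is to adapt the proof of Lemma 2.3 and Theorem 2.4 (the $N^-$ analysis for $S^1 \times S^2$) to the present setting, combined with the asymptotic machinery of Theorem 5.2. The key conceptual point is that the coefficient $\tfrac{4}{q!}(1-2^{-q})\zeta(q)$ arises from $\sum_{n \geq 1}(2n-1)^{-q}$, the leading term in the expansion of $\lfloor\tfrac{s+\ast}{2n-1}\rfloor^q$; this structure is insensitive to whether we are counting $N^+$ or $N^-$, so the leading asymptotic must agree, with all sign-dependent differences relegated to the $O(s^{q-1})$ remainder.

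First I would establish an analog of Lemma 5.1 for $N^-(s)$ by mimicking the two-case split in the proof of Lemma 2.3. Since $q$ is even, $j^2 < k(k+q-1)$ is equivalent to $|j| \leq k + \tfrac{q}{2} - 1$. In \emph{Case I} ($s \geq k(k+q-1)$, so $k \lesssim \sqrt{s}$), every admissible $|j|$ contributes, giving a total of $\sum_{k}(2k+q-1)\dim H_k^q = O(s^{(q+1)/2})$, which is $O(s^{q-1})$ for $q \geq 4$. In \emph{Case II} ($s < k(k+q-1)$), one has $\lceil\sqrt{k(k+q-1)-s}\,\rceil \leq |j| \leq k+\tfrac{q}{2}-1$, yielding $2\lfloor k + \tfrac{q}{2} - \sqrt{k(k+q-1) - s}\rfloor$ values of $j$. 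Setting $n = \lfloor k + \tfrac{q}{2} - \sqrt{k(k+q-1)-s}\rfloor$ and inverting yields $k = \tfrac{s + (n + \delta - q/2)^2}{2n - 1 + 2\delta}$, so for fixed $n$ the integer $k$ runs from $\lfloor\tfrac{s + (n+1-q/2)^2}{2n+1}\rfloor + 1$ to $\lfloor\tfrac{s + (n-q/2)^2}{2n-1}\rfloor$. Summing $\dim H_k^q$ over this range using the polynomial $Q_q$ from (5.9) and telescoping exactly as in (5.10) gives
\[
N^-(s) = 2\sum_{n=1}^{\lfloor\sqrt{s}\rfloor + O(1)} Q_q\!\left(\left\lfloor\tfrac{s + (n - q/2)^2}{2n-1}\right\rfloor\right) + O\bigl(s^{(q+1)/2}\bigr).
\]

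Finally, I would apply the expansion used in Theorem 5.2: $Q_q(m) = \tfrac{2}{q!}m^q + O(m^{q-1})$ together with $\lfloor\tfrac{s + (n-q/2)^2}{2n-1}\rfloor^q = \tfrac{s^q}{(2n-1)^q} + O\bigl(\tfrac{s^{q-1}}{(2n-1)^{q-1}}\bigr)$ uniformly for $n \leq \sqrt{s}+O(1)$. Summing yields
\[
N^-(s) = \tfrac{4}{q!}\sum_{n=1}^{\infty}\tfrac{s^q}{(2n-1)^q} + O(s^{q-1}) = \tfrac{4}{q!}(1-2^{-q})\zeta(q)\,s^q + O(s^{q-1}),
\]
where the tail beyond $n = \lfloor\sqrt{s}\rfloor$ contributes only $O(s^{(q+1)/2}) = O(s^{q-1})$. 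The main obstacle is the careful bookkeeping of boundary terms from the telescoping endpoints and from the Case I/II switch; however, because we only need an $O(s^{q-1})$ remainder (much coarser than the $s^{3/2}$-precise analysis required for $q=2$ in Theorem 2.4), the sign flip from $j^2 - k(k+q-1)$ to $k(k+q-1) - j^2$, which merely replaces $(n+q/2-1)^2$ by $(n-q/2)^2$ inside the floor, affects only subleading terms and cannot disturb the $s^q$ coefficient.
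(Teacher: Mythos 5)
Your proposal is correct and follows essentially the same route as the paper's (sketched) proof: the same Case I/Case II split with Case I absorbed into $O(s^{(q+1)/2})=O(s^{q-1})$, the same substitution $n=\lfloor k+\frac{q}{2}-\sqrt{k(k+q-1)-s}\rfloor$ with inversion and telescoping via $Q_q$, and the same reduction to the $\sum_n (2n-1)^{-q}$ computation from Theorem 5.2. (Incidentally, your inverted expression $k=\frac{s+(n+\delta-q/2)^2}{2n-1+2\delta}$ is the correct one --- the paper's printed $(n+\frac{q}{2}+\delta)^2$ is a sign slip, cf.\ $(2.16)$ --- but as you note this only shifts subleading terms.)
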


\begin{proof}
We sketch the proof, leaving out the details about many of the terms that only contribute to the error term. We note that the condition $j^2 < k(k+q-1)$ is essentially equivalent to $|j| \leq k + \frac{q}{2} - 1$ (we need $k \geq (\frac{q}{2}-1)^2 + 1$ for this to be exact). The condition $j^2 \geq k(k+q-1) - s$ leads to two cases:
\begin{enumerate}[I.]
\item
If $s \geq k(k+q-1)$ then this condition is always valid, and this leads to the number of $j$ values equal to $2k + q - 1$, and $k$ is bounded by essentially $\sqrt{s}$, so the total contribution is on the order of $\sum\limits_{k=1}^{\sqrt{s}}k^q = O(s^{(q+1)/2})$ which is $O(s^{q-1})$ since $q \geq 4$.
\item
If $s < k(k+q-1)$, then the condition is equivalent to $|j| \geq \lceil \sqrt{k(k+q-1) - s} \rceil$, so the number of $j$ values is $2\lfloor k + \frac{q}{2} - \sqrt{k(k+q-1)} - s\rfloor$, and this is nonzero when $k \leq \frac{1}{2}(s + (\frac{q}{2}-1)^2)$. We define $n$ by the equation $\sqrt{k(k+q-1)-s} = k + \frac{q}{2} - \delta - n$ with $0 \leq \delta < 1$, and solve for $k$ to obtain $k = \frac{s + (n + \frac{q}{2}+\delta)^2}{2n-1+2\delta}$. For fixed $n$ the range of $k$ is $\lfloor \frac{s + (n+1+ \frac{q}{2})^2}{2n+1} \leq k \leq \lfloor \frac{s + (n+\frac{q}{2})^2}{2n-1} \rfloor$, and the largest value of $n$ is on the order of $\sqrt{s}$. The rest of the proof is the same as for $N^+(s)$.
\end{enumerate}
\end{proof}
\begin{thm}
Suppose $p\geq 3$ is odd and $q$ is even. Then 

$(5.13)$ $N^{\pm}(s) = \frac{4}{(p+q-1)(p-1)!(q-1)!}(1-2^{-(p+q-1)})\zeta(p+q-1)s^{p+q-1} + O(s^{p+q-2})$.
\end{thm}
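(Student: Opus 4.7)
The plan is to adapt the arguments of Theorems 5.2 and 5.4 to the double-sum setting, organizing the pairs $(j,k)$ contributing to $N^+(s)$ by a ``gap'' parameter $n$. Setting $j = k + m$ with integer $m$, a direct computation gives
\[
j(j+p-1) - k(k+q-1) = (2m+p-q)k + m(m+p-1).
\]
Since $q$ is even and $p$ is odd, $2m+p-q$ is odd, and positivity of the gap for large $k$ forces $m \geq m_0 := (q-p+1)/2$. Setting $n := m - m_0 + 1$, we have $m(n) := m_0 + n - 1$ and the coefficient of $k$ becomes $2n-1$. Thus, up to an $O(s^{p+q-2})$ contribution from the finitely many ``exceptional'' pairs with $m < m_0$ (where the gap coefficient in $k$ is $\leq -1$, forcing $k$ into a bounded interval for each such $m$),
\[
N^+(s) = \sum_{n \geq 1} \sum_{k_n^- \leq k \leq K_n(s)} D^p_{k+m(n)} D^q_k + O(s^{p+q-2}),
\]
with $K_n(s) = \lfloor (s - m(n)(m(n)+p-1))/(2n-1)\rfloor$ and a uniformly bounded cutoff $k_n^- = O(1)$.

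Next, I insert the leading expansions $D^p_j = \tfrac{2}{(p-1)!}j^{p-1} + O(j^{p-2})$ and $D^q_k = \tfrac{2}{(q-1)!}k^{q-1} + O(k^{q-2})$ from $(5.1)$ to get
\[
\sum_{k=0}^{K_n(s)} D^p_{k+m(n)} D^q_k = \frac{4}{(p-1)!(q-1)!} \sum_{k=0}^{K_n(s)} (k+m(n))^{p-1} k^{q-1} + O\bigl(K_n(s)^{p+q-2}\bigr).
\]
The binomial expansion $(k+m)^{p-1} = k^{p-1} + O(m\,k^{p-2})$ and the standard identity $\sum_{k=0}^K k^{p+q-2} = K^{p+q-1}/(p+q-1) + O(K^{p+q-2})$ reduce this to $\tfrac{4}{(p-1)!(q-1)!(p+q-1)} K_n(s)^{p+q-1}$ plus an aggregate error of $O(n\, K_n(s)^{p+q-2})$. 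Writing $K_n(s) = s/(2n-1) + O(1)$ yields
\[
K_n(s)^{p+q-1} = \bigl(s/(2n-1)\bigr)^{p+q-1} + O\bigl(s^{p+q-2}/(2n-1)^{p+q-2}\bigr).
\]

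Summing over $n \geq 1$ produces the leading term
\[
\frac{4\, s^{p+q-1}}{(p-1)!(q-1)!(p+q-1)} \sum_{n=1}^{\infty} \frac{1}{(2n-1)^{p+q-1}} = \frac{4(1-2^{-(p+q-1)})\zeta(p+q-1)}{(p+q-1)(p-1)!(q-1)!}\, s^{p+q-1},
\]
matching the constant in $(5.13)$. The collected error has three sources: the tail $n > \sqrt{s}$ of the outer sum contributes $O(s^{(p+q)/2})$; the $O(1)$ in $K_n(s)$ contributes $O\bigl(s^{p+q-2} \sum_n (2n-1)^{-(p+q-2)}\bigr)$; and the binomial and multiplicity corrections contribute $O\bigl(s^{p+q-2} \sum_n (2n-1)^{-(p+q-3)}\bigr)$. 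Each is $O(s^{p+q-2})$ because the hypothesis $p \geq 3$ together with $q \geq 2$ even forces $p+q-1 \geq 5$, so every $\zeta$-type sum in the error converges absolutely. The case $N^-(s)$ is identical after swapping the roles of $(j,k)$ and $(p,q)$, and the leading constant is symmetric in $p$ and $q$.

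The main obstacle is the simultaneous control of these four error streams (multiplicity remainders, binomial expansion of $(k+m)^{p-1}$, the floor in $K_n(s)$, and the tail in $n$). The reason this higher-dimensional case is cleaner than the $S^1 \times S^2$ case of Section 2---and, in particular, why it admits no $s^{p+q-3/2}$ correction---is that the ``boundary'' contribution from $n \sim \sqrt{s}$, which generates the $s^{3/2}$ term in $(2.9)$, is now dominated by $s^{p+q-2}$, since $(p+q)/2 \leq p+q-2$ with room to spare once $p+q \geq 5$.
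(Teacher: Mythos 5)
Your proof is correct, but it takes a genuinely different route from the paper's. The paper fixes $k$, counts the admissible $j$ by setting $n=\lfloor\sqrt{s+k(k+q-1)}\rfloor-k-\frac{q-p-1}{2}$, inverts this relation to get the range of $k$ for each $n$, expands $(n+k+\frac{q-p-1}{2})^p-(k+\frac{q-p-1}{2})^p$, and telescopes; the parity of $p-q$ enters only at the very end, to rule out a divergent cross term (the case $p-q-2m=0$ cannot occur). You instead slice the lattice along the diagonals $j-k=m$ and observe that the eigenvalue is exactly linear in $k$ there, namely $(2m+p-q)k+m(m+p-1)$ with odd (hence nonzero) coefficient of $k$; the parity enters at the outset and eliminates the square roots and floor-of-square-root inversions entirely. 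This is a cleaner route to the same family of sums $\sum_n (s/(2n-1))^{p+q-1}$, at the cost of tracking the weight $D^p_{k+m}D^q_k$ along each diagonal rather than summing $j^{p-1}$ over an interval of $j$. Two small slips, neither fatal: (i) since $m(n)\sim n$, you have $K_n(s)=s/(2n-1)-m(m+p-1)/(2n-1)+O(1)=s/(2n-1)+O(n)$, not $+O(1)$; the resulting error stream is $O\bigl(s^{p+q-2}\sum_n(2n-1)^{-(p+q-3)}\bigr)$, which coincides with your binomial-correction stream and still converges because $p+q-3\geq 2$. (Note this $O(n)$ shift is exactly what produces the $s^{3/2}$ term when $p=1$, $q=2$, so it is worth stating correctly.) (ii) The hypotheses give $p+q\geq 5$, hence $p+q-1\geq 4$ (attained at $p=3$, $q=2$), not $\geq 5$; the inequalities you actually need, $p+q-3\geq 2$ and $(p+q)/2\leq p+q-2$, still hold.
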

\begin{proof}
By $(5.4)$ and $(5.1)$, the leading term of $N^+(s)$ is 

$(5.14)$ $\frac{4}{(p-1)!(q-1)!}\sum k^{q-1}j^{p-1}$ over $j \geq 0$ and $k \geq 0$ satisfying $0 < j(j+p-1) - k(k+q-1) \leq s$. We will see that the lower order terms contribute only to the remainder $O(s^{p+q-2})$. The condition $k(k+q-1) < j(j+p-1)$ is equivalent to $j \geq k + \frac{q-p+1}{2}$ for $k$ sufficiently large. The condition $j(j+p-1) \leq s + k(k+q-1)$ is equivalent to $j \leq \lfloor \sqrt{s + k(k+q-1)} \rfloor - \frac{p-1}{2}$ (with a finite number of exceptions). If we fix $k$ and sum over $j$ then $(5.14)$ is equal to 

$(5.15)$ $\frac{4}{p!(q-1)!}\sum\limits_{k=0}^{s - (\frac{q}{2})^2}k^{q-1} ((\lfloor \sqrt{s + k(k+q-1)} \rfloor - \frac{p-1}{2})^p - (k + \frac{q-p-1}{2})^p)$ plus lower order terms. So we define $n$ by 

$(5.16)$ $\sqrt{s + k(k+q-1)} = n + k + \frac{q-p-1}{2} + \delta$ with $0 \leq \delta < 1$, and $(5.15)$ becomes

$(5.17)$ $\frac{4}{p!(q-1)!} \sum\limits_{k=0}^{s-(\frac{q}{2})^2}k^{q-1}((n + k + \frac{q-p-1}{2})^p - (k + \frac{q - p -1}{2})^p)$. Note that when $k = s - (\frac{q}{2})^2$ then $n = \frac{p+1}{2}$, and when $k =0$ then $n = \lfloor \sqrt{s} \rfloor - \frac{q-p-1}{2}$, so that gives us the range of $n$. For fixed $n$ in this range we solve $(5.16)$ for $k$ to obtain $k = \frac{s - (n + \frac{q-p+1}{2} + \delta)^2}{2n - p + 2\delta}$ so the range of $k$ is 

$(5.18)$ $1 + \lfloor \frac{s - (n+1 + \frac{q - p + 1}{2})^2}{2n + 2 -p} \rfloor \leq k \leq \lfloor \frac{s - (n + \frac{q - p + 1}{2})^2}{2n-p} \rfloor$. We expand $(n + k + \frac{q-p-1}{2})^p - (k + \frac{q - p - 1}{2})^p = \sum\limits_{m=0}^{p-1} \binom{p}{m}n^{p-m}(k + \frac{q-p-1}{2})^m = pnk^{p-1} + \sum c_{kl}k^mn^l$ where the sum includes values of $m \leq p-2$ and $m + l \leq p$. As we will see, the sum only contributes to the remainder term. Taking the sum over $k$ in $(5.17)$ using the principal term $pnk^{p-1}$ produces 

$\frac{4}{(p+q-1)(p-1)!(q-1)!}\sum\limits_{n= \frac{p+1}{2}}^{\lfloor \sqrt{s} \rfloor -  \frac{q-p-1}{2}}n(a_n^{p+q-1} - a_{n-1}^{p+q-1})$ plus lower order terms, for $a_n = \lfloor \frac{s - (n + \frac{q-p+1}{2})^2}{2n-p} \rfloor$. Thus we obtain

$(5.19)$ $N^+(s) = \frac{4}{(p+q-1)(p-1)!(q-1)!}\sum\limits_{n = \frac{p+1}{2}}^{\lfloor \sqrt{s} \rfloor - \frac{q-p-1}{2}} \lfloor \frac{s- (n+\frac{q-p-1}{2})^2}{2n-p} \rfloor ^{p+q-1} + O(s^{p+q-2})$, which is the analog of $(5.6)$. The passage from $(5.19)$ to $(5.13)$ is the same as the passage from $(5.6)$ to $(5.11)$ given in the proof of Theorem 5.2. 

The verification that the lower order terms have been omitted contribute only $O(s^{p+q-2})$ is mostly routine. For example, the sum of $k^{q-1+m}n^l$ for $k$ in the range $(5.18)$ yields $\sum\limits_n n^l (a_n^{q+m} - a_{n+1}^{q+m}) = l\sum\limits_n n^{l-1}a_n^{q+m}$ plus lower order terms. Since $m+l \leq p$ the largest value comes from taking $l = p-m$, so for each $m \leq p-2$ we need to control $\sum\limits_n n^{p-m-1}a_n^{q+m}$, and $a_n \leq \frac{s}{2n-p}$ so we obtain $\sum\limits_{n = \frac{p+1}{2}}^{\sqrt{s}} s^{q+m} n^{p-q-2m-1}$. If $p - q - 2m - 1 \leq -2$ then the infinite series converges and we get the estimate $O(s^{q+m}) \leq O(s^{p+q-2})$. If $p-q - 2m -1 \geq 0$ then sum over $n$ is $O(s^{\frac{p-q-2m}{2}})$ so altogether we get $O(s^{\frac{p+q}{2}}) \leq O(s^{p+q-2})$  since $p + q \geq 5$. The remaining case $p - q - 2m = 0$ does not occur because $p - q$ is odd. 

The argument for $N^-(s)$ is essentially the same, just permuting the roles of $j$ and $k$.
\end{proof}
Figure 5.3 shows the graph of $R_{N^+}(s)/s^5$ for $p=3$, $q=4$. it does not appear that a limit exists as $s\rightarrow \infty$, although the oscillation is small. Averaging improves the behavior considerably, as it did for the case $p=1$,$q=4$ discovered earlier. Again we speculate that $(5.13)$ may be improved to

$(5.13')$ $N^+(s) = \frac{4}{(p+q-1)!(p-1)!(q-1)!}(1-2^{-(p+q-1)})\zeta(p+q-1)s^{p+q-1} + c(p,q)s^{p+q-2} + \tilde{R}_{N^+}(s)$ with $\tilde{R}_{N^+}(s) = O(s^{p+q-2-\delta(p,q)})$. This is illustrated in Figure 5.4 with numerically approximated values for $c(3,4)$ and $\delta(3,4) \approx 1.2$.
	\begin{figure}
	\hspace*{-.35in}
    \includegraphics[scale = .8]{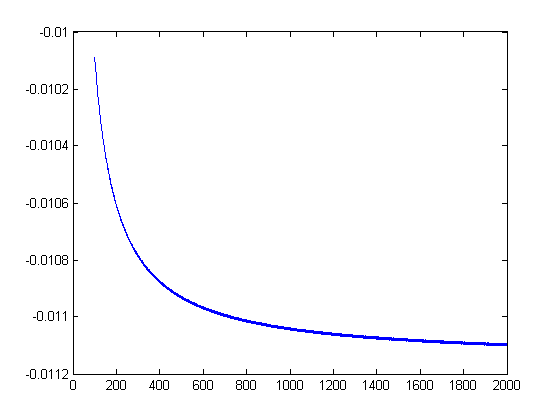}
   \caption{$R_{N^{+}}(s)/s^5$ from $(5.19)$ on $[100,2000]$ with $p = 3$ and $q = 4$}
\end{figure}
	\begin{figure}
	\hspace*{-.35in}
    \includegraphics[scale = .8]{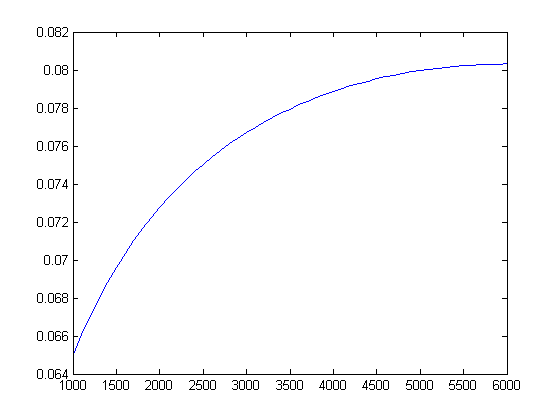}
   \caption{$(AR_{N^{+}}(s) + .00185845s^5)/s^{3.8}$ from $(5.19)$ on $[100,6000]$ with $p = 3$ and $q = 4$ and a sampling interval of 100}
\end{figure}
\section{Higher Order Equations}
Consider the operator $L_1 = (\frac{\partial}{\partial t})^4 + \bigtriangleup_x$ on $S^1 \times S^2$. It has the same eigenfunctions as $\Box$, but the eigenvalues are $j^4 - k(k+1)$ with the multiplicity $2k+1$. Here the 0-eigenspace consists of the constants, so it has multiplicity one. We define $N^+(s)$ and $N^-(s)$ as before, so

$(6.1)$ $N^+(s) = 2\sum\limits_k (2k+1)$ \#$\{ j > 0; 0 < j^4 - k(k+1) \leq s\}$ and 

$(6.2)$ $N^-(s) = \sum\limits_{k=0}^{\lfloor \sqrt{s + \frac{1}{4}} - \frac{1}{2} \rfloor} (2k+1) + 2\sum\limits_k(2k+1)$ \#$\{ j > 0; 0 < k(k+1) - j^4 \leq s\}$ with the first sum corresponding to $j = 0$.

\begin{lem}
For $L_1$ we have 

$(6.3)$ $N^+(s) = 2\sum\limits_{j = 1}^{\lfloor s^{1/4} \rfloor}j^4 + 2\sum\limits_{j= \lfloor s^{1/4} \rfloor + 1} ^{\lfloor s^{1/2} \rfloor} (j^4 - \lceil \sqrt{j^4 - s + \frac{1}{4}} - \frac{1}{2} \rceil^2)$
\end{lem}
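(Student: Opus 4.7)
The plan is to swap the order of summation in (6.1), summing over positive $j$ first and then over $k$ for each fixed $j$. By the usual $\pm j$ pairing this produces the leading factor of $2$ in (6.3), so we can restrict attention to $j>0$ throughout.

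For fixed $j>0$ the two conditions $0 < j^4 - k(k+1)$ and $j^4 - k(k+1) \le s$ translate into a range of $k$. Solving $k(k+1) = j^4$ gives the positive root $\sqrt{j^4+1/4}-1/2$, which lies strictly between $j^2-1$ and $j^2$ since $(j^2-1/2)^2 = j^4 - j^2 + 1/4 < j^4 + 1/4 < j^4 + j^2 + 1/4 = (j^2+1/2)^2$. Hence $k(k+1) < j^4$ is equivalent to $k \le j^2-1$. For the other inequality I split on the sign of $j^4-s$.

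If $j^4 \le s$, the constraint $k(k+1) \ge j^4 - s$ is automatic, so $k$ runs over $\{0,1,\ldots,j^2-1\}$ and the identity $\sum_{k=0}^{j^2-1}(2k+1) = j^4$ yields the contribution $j^4$ per $j$. This accounts for the first sum in (6.3), with the upper limit $\lfloor s^{1/4}\rfloor$ coming directly from $j^4 \le s$. If instead $j^4 > s$, then $k(k+1) \ge j^4-s$ forces $k \ge k_0(j) := \lceil \sqrt{j^4-s+1/4}-1/2\rceil$ (by solving the quadratic $k(k+1) = j^4-s$ for its positive root), and summing $(2k+1)$ from $k_0(j)$ to $j^2-1$ gives $j^4 - k_0(j)^2$, which matches the summand of the second sum.

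It remains to identify the upper bound $\lfloor s^{1/2}\rfloor$ on $j$ in the second sum. The range $k_0(j)\le k \le j^2-1$ is nonempty precisely when $\sqrt{j^4-s+1/4}-1/2 \le j^2-1$, and squaring reduces this to $j^2 \le s$; conversely if $j > \lfloor s^{1/2}\rfloor$ then $k_0(j) > j^2-1$ and no $k$ contributes. Combining the two cases and multiplying by $2$ yields (6.3). The main obstacle is purely bookkeeping of the ceiling/floor at the two cutoffs $j = \lfloor s^{1/4}\rfloor$ and $j=\lfloor s^{1/2}\rfloor$; there is no genuine analytic content beyond the quadratic formula for $k$ in terms of $j$ and $s$.
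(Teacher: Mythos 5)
Your argument is correct and follows essentially the same route as the paper's proof: fix $j>0$, convert the two inequalities into the range $\lceil \sqrt{j^4-s+\tfrac14}-\tfrac12\rceil \le k \le j^2-1$, split according to whether $j^4\le s$, and evaluate $\sum(2k+1)$ telescopically, with the cutoff $j\le\lfloor\sqrt{s}\rfloor$ coming from nonemptiness of the $k$-range. No substantive differences.
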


\begin{proof}
We fix $j >0$ and find bounds for $k$. Note that $k(k+1) < j^4$ is equivalent to $k \leq j^2 - 1$, while $j^4 - k(k+1) \leq s$ is equivalent to $\sqrt{j^4 - s + \frac{1}{4}} - \frac{1}{2} \leq k$, and since $k$ is an integer this is the same as $\lceil \sqrt{j^4 - s + \frac{1}{4}} - \frac{1}{2} \rceil \leq k$. When $j \leq \lfloor s^{1/4} \rfloor$, the lower bound is just $k \geq 0$, so $2\sum\limits_{k=0}^{j^2 - 1}(2k+1) = 2j^4$, and this contributes the first sum in $(6.3)$. When $j \geq \lfloor s^{1/4} \rfloor + 1$ then we observe that the condition $\sqrt{j^4 - s + \frac{1}{4}} - \frac{1}{2} \leq j^2 - 1$ is equivalent to $j \leq \sqrt{s}$, so this gives the upper bound for $j$ in the second sum in $(6.3)$.
\end{proof}

\begin{thm}
For $L_1$ we have the asymptotic estimate 

$(6.4)$ $\frac{4}{3}s^{3/2} \leq N^{+}(s) \leq \frac{8}{3}s^{3/2}$ as $s \rightarrow \infty$.
\end{thm}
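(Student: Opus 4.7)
The plan is to work directly from the formula (6.3) provided by Lemma 6.1. The first sum $2\sum_{j=1}^{\lfloor s^{1/4}\rfloor} j^4$ evaluates by the standard power-sum formula to $\tfrac{2}{5} s^{5/4} + O(s)$, which is $O(s^{5/4})$ and therefore strictly of lower order than $s^{3/2}$. So the entire asymptotic content of (6.4) comes from the second sum.

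Let me write $k_{\min}(j) = \lceil \sqrt{j^4 - s + 1/4} - 1/2 \rceil$ for the quantity appearing in (6.3). By inverting the quadratic, $k_{\min}(j)$ is precisely the smallest nonnegative integer $k$ satisfying $k(k+1) \geq j^4 - s$. From this characterization I would derive a two-sided estimate for $j^4 - k_{\min}(j)^2$: the defining inequality gives $k_{\min}^2 \geq j^4 - s - k_{\min}$ and hence $j^4 - k_{\min}^2 \leq s + k_{\min}$, while the failure of the inequality at $k_{\min}-1$ (valid once $k_{\min} \geq 1$, i.e., once $j > s^{1/4}$) gives $(k_{\min}-1)k_{\min} < j^4 - s$, hence $j^4 - k_{\min}^2 > s - k_{\min}$. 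Since $k_{\min} \leq j^2 - 1 < j^2$ on the summation range, I obtain the clean bound
\begin{equation*}
s - j^2 \;<\; j^4 - k_{\min}(j)^2 \;\leq\; s + j^2
\end{equation*}
for every $j$ with $\lfloor s^{1/4} \rfloor + 1 \leq j \leq \lfloor s^{1/2} \rfloor$.

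Summing these bounds termwise over $j$ in $[\lfloor s^{1/4}\rfloor+1,\lfloor s^{1/2}\rfloor]$ completes the argument. The constant term contributes $2s \cdot (\lfloor s^{1/2}\rfloor - \lfloor s^{1/4}\rfloor) = 2 s^{3/2} + O(s^{5/4})$, while $2\sum_{j \leq \sqrt{s}} j^2 = \tfrac{2}{3} s^{3/2} + O(s)$, and the truncation of the sum below at $\lfloor s^{1/4}\rfloor$ changes this only by $O(s^{3/4})$. Combining the upper estimate gives $2s^{3/2} + \tfrac{2}{3}s^{3/2} + O(s^{5/4}) = \tfrac{8}{3} s^{3/2} + O(s^{5/4})$, and the lower estimate gives $\tfrac{4}{3} s^{3/2} + O(s^{5/4})$. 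Adding back the $O(s^{5/4})$ contribution from the first sum of (6.3) yields $\tfrac{4}{3} s^{3/2}(1+o(1)) \leq N^+(s) \leq \tfrac{8}{3} s^{3/2}(1+o(1))$, which is (6.4).

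The argument is essentially bookkeeping; there is no real obstacle, and the mild part is simply verifying that all the floor/ceiling slop — plus the small-$j$ truncation — remains in the lower-order term $O(s^{5/4})$. The reason we obtain only a constant-factor sandwich, rather than an exact asymptotic as in Theorems 2.2 and 2.4, is structural: the gap between the lower bound $s - j^2$ and the upper bound $s + j^2$ on $j^4 - k_{\min}^2$ is genuine, not a defect of the method. Each fixed $j$ contributes something between $s - j^2$ and $s + j^2$ depending on the fractional part of $\sqrt{j^4 - s + 1/4}$, and these fractional parts do not average out to anything nice because the spacing of $\{j^4\}$ is so uneven. This is why (6.4) is only a one-sided estimate on each side, with the true constant presumably lying strictly between $\tfrac{4}{3}$ and $\tfrac{8}{3}$ without a closed form.
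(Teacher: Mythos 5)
Your argument is correct and is essentially the paper's own proof: both discard the first sum of (6.3) as $O(s^{5/4})$, show that each term of the second sum equals $s$ up to an error of absolute value at most $j^2$ (plus $O(1)$), and then use $2\sum_{j\le\sqrt{s}}j^2=\tfrac{2}{3}s^{3/2}+O(s)$ to get the sandwich $2s^{3/2}\pm\tfrac{2}{3}s^{3/2}$. The only cosmetic difference is that the paper encodes the ceiling via a fractional part $\eta_j$ and bounds $|(1-2\eta_j)\sqrt{j^4-s+\tfrac14}|\le j^2$, whereas you use the minimality characterization of $k_{\min}(j)$ to reach the same two-sided bound.
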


\begin{proof}
The first sum in $(6.3)$ is $O(s^{5/4})$. For the second sum we write $\lceil \sqrt{j^4 - s + \frac{1}{4}} - \frac{1}{2} \rceil = \sqrt{j^4 - s + \frac{1}{4}} + \eta_j - \frac{1}{2}$ for $0 \leq \eta_j < 1$ so $j^4 - \lceil \sqrt{j^4 - s + \frac{1}{4}} - \frac{1}{2} \rceil^{2} = s - \frac{1}{4} + (1-2\eta_j)\sqrt{j^4 - s + \frac{1}{4}} - (\eta_j - \frac{1}{2})^2,$ so the second sum is 

$(6.5)$ $2s^{3/2} + 2\sum\limits_{j = \lfloor s^{1/4} \rfloor + 1}^{\lfloor s^{1/2} \rfloor} (1- 2 \eta_j)\sqrt{j^4 - s + \frac{1}{4}} + O(s^{5/4})$. An upper bound for the sum in $(6.5)$ is $2\sum\limits_{j=0}^{\lfloor s^{1/2} \rfloor} j^2 = \frac{2}{3}s^{3/2}$, and a lower bound is $-\frac{2}{3}s^{3/2}$, so that yields $(6.4)$. 
\end{proof}

Figure $6.1$ shows the graph of $N^+(s)/s^{3/2}$. It seems unlikely that a limit exists as $s \rightarrow \infty$. Figure $6.2$ shows the average of the graph also divided by $s^{3/2}$. Now it appears likely that a limit exists, but the convergence is too slow to allow us to guess what the limit might be.
\begin{figure}
	\hspace*{-.35in}
    \includegraphics[scale = .8]{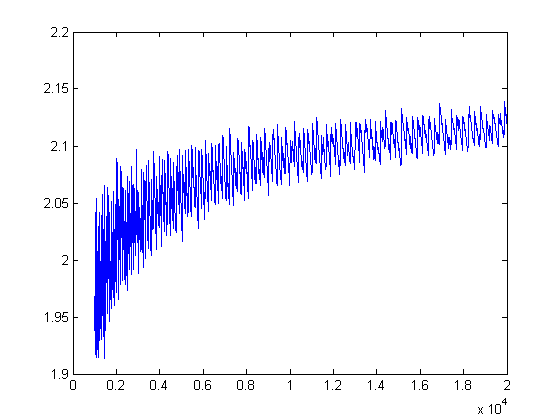}
   \caption{$N^{+}(s)/s^{3/2}$ from $(6.3)$ on $[1000,20000]$}
\end{figure}
\begin{figure}
	\hspace*{-.35in}
    \includegraphics[scale = .8]{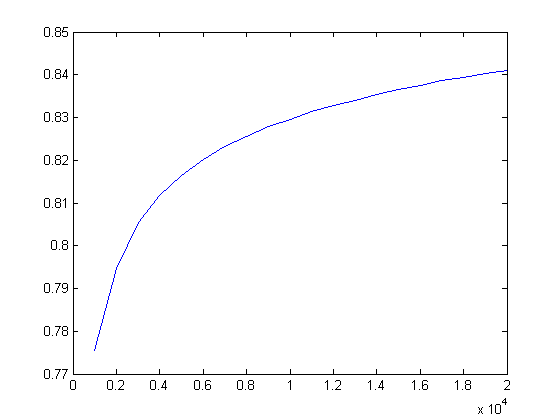}
    \caption{$A_{N^{+}}(s)/s^{3/2}$ from $(6.3)$ on $[1000,20000]$ sampling every 1000th integer}
\end{figure}

\begin{lem}
For $L_1$ we have 

$(6.6)$ $N^-(s) = \lfloor \sqrt{s + \frac{1}{4}} + \frac{1}{2} \rfloor ^2 + 2 \sum\limits_{j=1}^{\lfloor \sqrt{s} \rfloor} \left( \lfloor \sqrt{s + \frac{1}{4} + j^4} + \frac{1}{2} \rfloor ^2 - j^4 \right)$
\end{lem}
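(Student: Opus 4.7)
The plan is to start from the expression $(6.2)$ for $N^-(s)$ and carry out two independent simplifications: evaluate the first sum in closed form, and then rewrite the second by swapping the order of summation, analogous to how Lemma 6.1 handled $N^+$.

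First I would handle the $j=0$ term: the inner sum is $\sum_{k=0}^{m}(2k+1) = (m+1)^2$ with $m = \lfloor \sqrt{s+\tfrac14}-\tfrac12 \rfloor$. Using the elementary identity $\lfloor x - \tfrac12 \rfloor + 1 = \lfloor x + \tfrac12 \rfloor$ (valid for all real $x$) this collapses to $\lfloor \sqrt{s+\tfrac14}+\tfrac12\rfloor^{2}$, which is exactly the first term on the right of $(6.6)$.

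Next I would fix $j \geq 1$ and determine the range of $k$ for which $0 < k(k+1) - j^4 \leq s$. Completing the square gives $k(k+1) > j^4 \iff (k+\tfrac12)^2 > j^4 + \tfrac14$; since $(j^2+\tfrac12)^2 = j^4 + j^2 + \tfrac14$ and $(j^2-\tfrac12)^2 = j^4 - j^2 + \tfrac14$, the integer condition is just $k \geq j^2$. Similarly, $k(k+1) \leq s + j^4$ is equivalent to $k + \tfrac12 \leq \sqrt{s+j^4+\tfrac14}$, i.e.\ $k \leq \lfloor \sqrt{s+j^4+\tfrac14}+\tfrac12\rfloor - 1$. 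Summing the arithmetic series,
\[
\sum_{k=j^2}^{M}(2k+1) = (M+1)^2 - j^4
\quad\text{with}\quad
M+1 = \Bigl\lfloor \sqrt{s+j^4+\tfrac14}+\tfrac12\Bigr\rfloor,
\]
which produces the summand in $(6.6)$. The outer range $1 \leq j \leq \lfloor\sqrt s\rfloor$ is then forced by the requirement that the $k$-interval be nonempty: the smallest allowed value $k=j^2$ yields $k(k+1) - j^4 = j^2$, so we need $j^2 \leq s$.

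The calculations are essentially bookkeeping; the only point that needs care is the simultaneous handling of the floor/ceiling bounds, in particular the passage from the ceiling $\lceil\sqrt{j^4+\tfrac14}-\tfrac12\rceil$ (the raw bound coming from $k(k+1)>j^4$) to the clean integer $j^2$, and the shift from $\lfloor \cdot - \tfrac12 \rfloor$ to $\lfloor \cdot + \tfrac12 \rfloor - 1$ in the upper bound. Once those two identifications are made, assembling the pieces yields $(6.6)$ directly.
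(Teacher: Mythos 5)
Your proposal is correct and follows essentially the same route as the paper: evaluate the $j=0$ contribution as $\lfloor\sqrt{s+\tfrac14}+\tfrac12\rfloor^2$, then for fixed $j\ge 1$ show the admissible $k$ run from $j^2$ up to $\lfloor\sqrt{s+\tfrac14+j^4}-\tfrac12\rfloor$ and sum the arithmetic series, with nonemptiness forcing $j\le\lfloor\sqrt s\rfloor$. Your handling of the floor-shift identities and the nonemptiness check (via $k=j^2$ giving $k(k+1)-j^4=j^2$) is just a slightly more explicit version of what the paper does.
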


\begin{proof}
The first sum in $(6.2)$ contributes exactly $\lfloor \sqrt{s + \frac{1}{4}} + \frac{1}{2}\rfloor ^2$ to $N^-(s)$. For the second sum we fix $j$ and find the bounds on $k$. Note that $j^4 < k(k+1)$ is equivalent to $k \geq j^2$ while $k(k+1) \leq s + j^4$ is equivalent  to $k \leq \lfloor \sqrt{s + \frac{1}{4} + j^4} - \frac{1}{2}]$, while the condition $j^2 \leq \sqrt{s + \frac{1}{4} + j^4} - \frac{1}{2}$ is equivalent to $j \leq \lfloor \sqrt{s} \rfloor$, so the second sum in $(6.2)$ is equal to the second sum in $(6.6)$.
\end{proof}

\begin{thm}
For $L_1$ we have the asymptotic estimate 

$(6.7)$ $\frac{4}{3}s^{3/2} \leq N^-(s) \leq \frac{8}{3}s^{3/2}$ as $s \rightarrow \infty$.
\end{thm}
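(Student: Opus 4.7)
The plan is to mirror the proof of Theorem 6.2, now using the formula for $N^-(s)$ provided by Lemma 6.3. First I would observe that the initial term $\lfloor\sqrt{s + 1/4} + 1/2\rfloor^2 = O(s)$ in $(6.6)$ is negligible compared with the target $s^{3/2}$ bounds, reducing the problem to controlling the main sum $2\sum_{j=1}^{\lfloor\sqrt{s}\rfloor}\bigl(\lfloor\sqrt{s + 1/4 + j^4} + 1/2\rfloor^2 - j^4\bigr)$.

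Next I would write $\lfloor\sqrt{s + 1/4 + j^4} + 1/2\rfloor = \sqrt{s + 1/4 + j^4} + \tfrac{1}{2} - \eta_j$ with $\eta_j \in [0, 1)$, square, and subtract $j^4$ to obtain, for each $j$,
\[
s + \tfrac{1}{4} + (1 - 2\eta_j)\sqrt{s + 1/4 + j^4} + (\tfrac{1}{2} - \eta_j)^2.
\]
Summing over $j$ and multiplying by $2$ produces three contributions: a deterministic principal term $2\lfloor\sqrt{s}\rfloor(s + 1/4) = 2s^{3/2} + O(s)$, an oscillatory middle term $2\sum_j (1 - 2\eta_j)\sqrt{s + 1/4 + j^4}$, and a bounded tail $2\sum_j (\tfrac{1}{2} - \eta_j)^2 = O(\sqrt{s})$.

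The one estimate of substance is an absolute bound on the oscillatory middle term. Using $|1 - 2\eta_j| \leq 1$ together with the elementary inequality $\sqrt{a + b} \leq \sqrt{a} + \sqrt{b}$, I would estimate
\[
\sum_{j=1}^{\lfloor\sqrt{s}\rfloor}\sqrt{s + 1/4 + j^4} \leq \lfloor\sqrt{s}\rfloor\sqrt{s + 1/4} + \sum_{j=1}^{\lfloor\sqrt{s}\rfloor} j^2 = \tfrac{1}{3}s^{3/2} + O(s),
\]
so the middle term lies in $[-\tfrac{2}{3}s^{3/2}, \tfrac{2}{3}s^{3/2}] + O(s)$. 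Combining the three pieces gives $N^-(s) \in [\tfrac{4}{3}s^{3/2}, \tfrac{8}{3}s^{3/2}] + O(s)$, which yields $(6.7)$.

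I do not expect a serious obstacle: the argument is essentially the mirror image of Theorem 6.2, the only differences being that $j$ now runs over the full range $1 \leq j \leq \lfloor\sqrt{s}\rfloor$ and the radicand reads $s + j^4$ rather than $j^4 - s$. These changes shift the indices of summation but not the leading behavior, and the coefficients $\tfrac{4}{3}$ and $\tfrac{8}{3}$ emerge in exactly the same way. The only place where care is required is splitting $j$ around $s^{1/4}$ to verify that the crude bound $\sqrt{s + 1/4 + j^4}\leq \sqrt{s + 1/4} + j^2$ really does lose only lower-order terms, which it does.
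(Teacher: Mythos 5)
Your argument is correct and is essentially identical to the paper's own proof: the same decomposition $\lfloor\sqrt{s+1/4+j^4}+\tfrac12\rfloor=\sqrt{s+1/4+j^4}+\tfrac12-\eta_j$, the same identification of the principal term $2s^{3/2}$, and the same bound $\sqrt{s+1/4+j^4}\leq j^2+\sqrt{s}$ on the oscillatory sum yielding the window $[\tfrac43 s^{3/2},\tfrac83 s^{3/2}]$ up to $O(s)$. (Your closing remark about splitting $j$ around $s^{1/4}$ is unnecessary here --- that issue only arises for $N^+$, where the radicand is $j^4-s$ --- but it does not affect the proof.)
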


\begin{proof}
The first term in $(6.6)$ is $O(s)$. To estimate the sum in $(6.6)$ we write $\lfloor \sqrt{s + \frac{1}{4} + j^4} + \frac{1}{2} \rfloor = \sqrt{s + \frac{1}{4} + j^4} + \frac{1}{2} - \eta_j$ for $0 < \eta_j \leq 1$. The sum is exactly 

$(6.8)$ $2\sum\limits_{j=1}^{\lfloor \sqrt{s} \rfloor} \left( s + (1-2\eta_j)\sqrt{s + \frac{1}{4} + j^4} + \frac{1}{4} + (\frac{1}{2} - \eta_j)^2\right) = 2s^{3/2} + 2\sum\limits_{j=1}^{\lfloor \sqrt{s} \rfloor} (1 - 2\eta_j) \sqrt{s + \frac{1}{4} + j^4} + O(s^{1/2})$

We use the estimate $\sqrt{s + \frac{1}{4} + j^4} \leq j^2 + \sqrt{s}$ in $(6.8)$ to obtain $(6.7)$.
\end{proof}
Figure $6.3$ shows the graph of $N^-(s)/s^{3/2}$. Again it appears unlikely that a limit exists. On the other hand, the average divided by $s^{3/2}$ appears to converge rapidly enough that we can estimate the limit to be $.9008175$. Figure 6.4 shows the graph of the remainder divided by $s^{3/2}$.
\begin{figure}
	\hspace*{-.35in}
    \includegraphics[scale = .8]{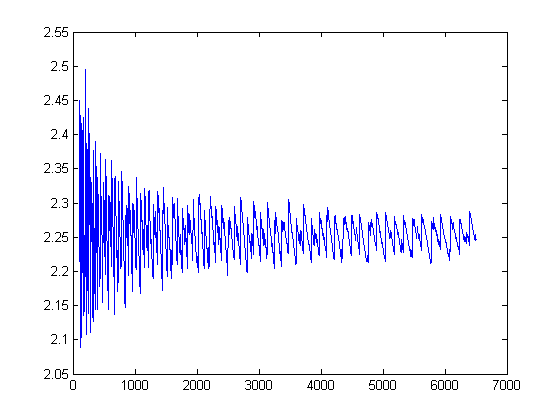}
   \caption{$N^{-}(s)/s^{3/2}$ from $(6.6)$ on $[100,6500]$}
\end{figure}
\begin{figure}
	\hspace*{-.35in}
    \includegraphics[scale = .8]{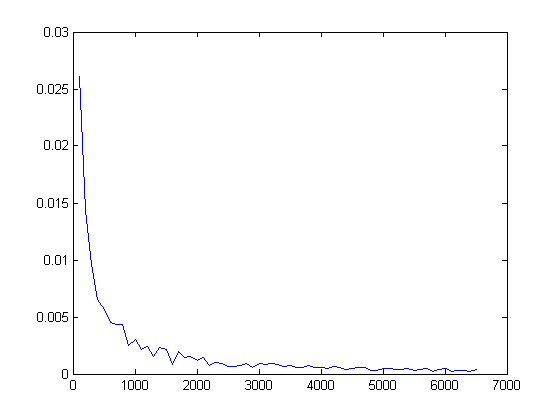}
   \caption{$(A_{N^{-}}(s) -.9008175 *s^{3/2}) /s^{3/2}$ from $(6.6)$ on $[100,6500]$ with a sampling interval of 100}
\end{figure}

Finally we consider the operator $L_2 = -\frac{\partial^2}{\partial t^2} - \bigtriangleup ^2_x$. Again it has the same eigenfunctions and multiplicities, with the eigenvalues now equal to $j^2- (k(k+1))^2$. We notice that the 0-eigenspace is infinite dimensional, corresponding to $|j| = k(k+1)$. Just as in the first example in section 3, if we ignore this we can still consider $N^+(s)$ and $N^-(s)$,

$(6.9)$ $N^+(s) = \sum(2k+1)$ over all solutions to $0 < j^2 - (k(k+1))^2 \leq s$,

$(6.10)$ $N^-(s) = \sum(2k+1)$ over all solutions to $0 < (k(k+1))^2 - j^2 \leq s$.

\begin{lem}
For $L_2$ we have 

$(6.11)$ $N^+(s) = 2\sum\limits_{n=1}^{\lfloor \sqrt{s} \rfloor} \lfloor \frac{1 + \sqrt{1 + \frac{2}{n}(s-n^2)}}{2} \rfloor ^2$.
\end{lem}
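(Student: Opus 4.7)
The plan is to follow exactly the strategy of Lemma 2.1, adapted to the quartic growth of the spatial eigenvalues $(k(k+1))^{2}$. Fixing $k \geq 0$, the condition $0 < j^{2} - (k(k+1))^{2}$ is equivalent to $k(k+1) + 1 \leq |j|$ and the condition $j^{2} - (k(k+1))^{2} \leq s$ to $|j| \leq \lfloor \sqrt{s+(k(k+1))^{2}}\rfloor$. Grouping $\pm j$ together and multiplying by the multiplicity $(2k+1)$, each $k$ contributes $2n(2k+1)$ to $N^{+}(s)$, where I set $n := \lfloor \sqrt{s+(k(k+1))^{2}}\rfloor - k(k+1)$.

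Next I reorganize the sum by fixing $n \geq 1$ and identifying which $k$ realize it. Writing $\sqrt{s+(k(k+1))^{2}} = n + k(k+1) + \delta$ with $0 \leq \delta < 1$ and squaring gives
$$k(k+1) \;=\; \frac{s - (n+\delta)^{2}}{2(n+\delta)},$$
which is strictly decreasing in $\delta$. Hence $k(k+1)$ sweeps out the half-open interval $\left(\frac{s-(n+1)^{2}}{2(n+1)},\, \frac{s-n^{2}}{2n}\right]$, and the admissible $k$ are exactly the nonnegative integers whose value of $k(k+1)$ lies in this interval.

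To sum $(2k+1)$ over these $k$, I use the elementary fact that the number of nonnegative integers $k$ with $k(k+1) \leq m$ equals $\lfloor \frac{1 + \sqrt{1+4m}}{2}\rfloor$ for $m \geq 0$ (and zero for $m < 0$), together with the identity $\sum_{k=0}^{K-1}(2k+1) = K^{2}$. Setting $a_{n} := \lfloor \frac{1 + \sqrt{1 + \frac{2}{n}(s - n^{2})}}{2}\rfloor$ and subtracting the count at $m = \frac{s-(n+1)^{2}}{2(n+1)}$ from the count at $m = \frac{s-n^{2}}{2n}$, the contribution of fixed $n$ to $N^{+}(s)$ is exactly $2n(a_{n}^{2} - a_{n+1}^{2})$. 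Abel summation (as in Lemma 2.1) then gives $\sum_{n=1}^{N} 2n(a_{n}^{2} - a_{n+1}^{2}) = 2\sum_{n=1}^{N} a_{n}^{2} - 2N a_{N+1}^{2}$.

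The final step is to truncate: for $n > \lfloor \sqrt{s}\rfloor$ the quantity $\frac{s-n^{2}}{2n}$ is negative, so no $k \geq 0$ satisfies $k(k+1) \leq \frac{s-n^{2}}{2n}$ and therefore $a_{n} = 0$. Taking $N = \lfloor \sqrt{s}\rfloor$ kills the boundary term $2N a_{N+1}^{2}$ and yields precisely $(6.11)$. The main obstacle, such as it is, will be careful handling of the boundary indices $n$ near $\lfloor \sqrt{s}\rfloor$ and confirming that the floor formula $\lfloor \frac{1+\sqrt{1+4m}}{2}\rfloor$ correctly produces $a_{n}=1$ when $0 \leq \frac{s-n^{2}}{2n} < 2$ (only $k = 0$ qualifying); everything else is bookkeeping strictly parallel to the proof of Lemma 2.1.
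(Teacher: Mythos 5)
Your proposal is correct and follows essentially the same route as the paper: group $\pm j$, set $n = \lfloor\sqrt{s+(k(k+1))^2}\rfloor - k(k+1)$, invert to find the range of $k$ for each fixed $n$, and telescope the resulting sum of $2n(a_n^2 - a_{n+1}^2)$, with the vanishing of $a_{\lfloor\sqrt{s}\rfloor+1}$ killing the boundary term. Your phrasing via the counting function $\lfloor\frac{1+\sqrt{1+4m}}{2}\rfloor$ for $\#\{k \geq 0 : k(k+1) \leq m\}$ is just a cleaner restatement of the paper's explicit bounds on $k$ obtained by solving the quadratic in $\delta$.
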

\begin{proof}
The condition $(k(k+1))^2 < j^2$ is equivalent to $k(k+1) + 1 \leq |j|$, and the condition $j^2 \leq s + (k(k+1))^2$ is equivalent to $|j| \leq \lfloor \sqrt{s + (k(k+1))^2} \rfloor$. In order to have at least one solution to both inequalities, namely $j = k(k+1) + 1$, we must have $k(k+1) + 1 \leq \sqrt{s + (k(k+1))^2}$, which is equivalent to $k \leq \lfloor \frac{\sqrt{2s-1} - 1}{2} \rfloor$. So $(6.9)$ becomes 

$(6.12)$ $N^+(s) = 2\sum\limits_{k=0}^{\lfloor \frac{\sqrt{2s-1} -1}{2} \rfloor} (2k+1)(\lfloor \sqrt{s+(k(k+1))^2} - k(k+1) \rfloor)$. 

Now define $n$ by $\sqrt{s + (k(k+1))^2} = n + k(k+1) + \delta$ for $0 \leq \delta < 1$. Solving for $k$ we obtain $k = \frac{-1 + \sqrt{1 + \frac{2}{n+\delta}(s - (n + \delta)^2)}}{2}$, which yields the bounds $\lfloor \frac{1 + \sqrt{1 + \frac{2}{n+1}(s - (2n+1)^2)}}{2} \rfloor $

$\leq k \leq \lfloor \frac{-1 + \sqrt{1 + \frac{2}{n}(s-n^2)}}{2} \rfloor$, so $(6.12)$ becomes $N^+(s) = 2\sum\limits_{n=1}^{\lfloor \sqrt{s} \rfloor} n(a_n^2 - a^2_{n+1})$ 

$= 2\sum\limits_{n=1}^{\lfloor \sqrt{s} \rfloor} a_n^2$ for $a_n = \lfloor \frac{1 + \sqrt{1 + \frac{2}{n}(s-n^2)}}{2} \rfloor$, and this yields $(6.11)$.
\end{proof}
\begin{thm}
For $L_2$ we have the asymptotic formula 

$(6.13)$ $N^+(s) = \frac{1}{2}s\log{s}+(\gamma - \frac{1}{2})s + R_{N^+}(s)$ with the remainder estimate

$(6.14)$ $R_{N^+}(s) = O(s^{3/4})$.
\end{thm}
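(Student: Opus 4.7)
The plan is to remove the floor in $(6.11)$, identify the main term, and bound each residual piece, in direct parallel with the proof of Theorem 3.2. Set
\[
b_n \;=\; \frac{1+\sqrt{1+\frac{2}{n}(s-n^{2})}}{2}, \qquad a_n = b_n - \eta_n, \quad 0\le\eta_n<1,
\]
so that $N^{+}(s) = 2\sum_{n=1}^{\lfloor\sqrt{s}\rfloor}(b_n^{2} - 2 b_n \eta_n + \eta_n^{2})$. The algebraic observation that drives the rest of the proof is that squaring out the radical gives
\[
2 b_n^{2} \;=\; 1 + \frac{s-n^{2}}{n} + \sqrt{1+\frac{2(s-n^{2})}{n}}.
\]
Substituting, $N^{+}(s)$ decomposes as
\[
N^{+}(s) \;=\; \lfloor\sqrt{s}\rfloor + \sum_{n=1}^{\lfloor\sqrt{s}\rfloor}\frac{s-n^{2}}{n} + \sum_{n=1}^{\lfloor\sqrt{s}\rfloor}\sqrt{1+\frac{2(s-n^{2})}{n}} - 4\sum_{n}b_n\eta_n + 2\sum_{n}\eta_n^{2}.
\]

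Next I would extract the asserted main term from the second sum. Writing it as $s\,H_{\lfloor\sqrt{s}\rfloor} - \frac{1}{2}\lfloor\sqrt{s}\rfloor(\lfloor\sqrt{s}\rfloor+1)$ and using $H_M = \log M + \gamma + O(1/M)$ with $M = \lfloor\sqrt{s}\rfloor$, together with $\log\lfloor\sqrt{s}\rfloor = \frac{1}{2}\log s + O(s^{-1/2})$, yields $\frac{1}{2} s\log s + \gamma s - \frac{1}{2} s + O(\sqrt{s})$. This is exactly the leading behavior $\frac{1}{2}s\log s + (\gamma - \frac{1}{2})s$ asserted in $(6.13)$, with the $O(\sqrt{s})$ error absorbed into $R_{N^{+}}(s)$.

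It then remains to show that each of the other four sums is $O(s^{3/4})$. The contribution $\lfloor\sqrt{s}\rfloor$ and the bound $\sum\eta_n^{2} \le \lfloor\sqrt{s}\rfloor$ are trivially $O(\sqrt{s})$. For the radical sum, since $1\le n\le \sqrt{s}$ forces $s-n^{2}\ge 0$, I would use $\sqrt{1+\frac{2(s-n^{2})}{n}} \le \sqrt{1+\frac{2s}{n}} \le 1+\sqrt{2s/n}$ together with $\sum_{n=1}^{\lfloor\sqrt{s}\rfloor} n^{-1/2} = 2s^{1/4} + O(1)$, producing a bound of the shape $2\sqrt{2}\,s^{3/4} + O(\sqrt{s})$. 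This is the term that dictates the exponent $3/4$ in $(6.14)$. The cross term $\sum b_n\eta_n$ is controlled by $\sum b_n$, which is half of the same pair of sums and hence also $O(s^{3/4})$.

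The main obstacle, if one wished to sharpen $(6.14)$, would be giving a genuine asymptotic (rather than the crude pointwise upper bound above) for the radical sum $\sum\sqrt{1+2(s-n^{2})/n}$ and for the cross term $\sum b_n\eta_n$; this would require understanding the distribution of the fractional parts $\eta_n$, which looks delicate and is not needed for the stated theorem. For the $O(s^{3/4})$ remainder the crude estimate suffices, so the proof reduces to the bookkeeping of the five pieces in the displayed decomposition above.
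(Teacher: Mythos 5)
Your proposal is correct and follows essentially the same route as the paper: remove the floor via $a_n=b_n-\eta_n$, expand $2b_n^2=1+\frac{s-n^2}{n}+\sqrt{1+\frac{2}{n}(s-n^2)}$, extract the main term $\frac12 s\log s+(\gamma-\frac12)s$ from $\sum(\frac{s}{n}-n)$, and bound the radical and cross terms by $O(s^{3/4})$ using $\sum n^{-1/2}\sim 2s^{1/4}$. The only cosmetic difference is that the paper groups the cross term with the radical as $(1-2\eta_n)\sqrt{1+\frac{2}{n}(s-n^2)}$ before bounding, whereas you bound the two pieces separately; both give the same estimate.
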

\begin{proof}
Write $\lfloor \frac{1 + \sqrt{1 + \frac{2}{n}(s-n^2)}}{2} \rfloor = \frac{1 + \sqrt{1 + \frac{2}{n}(s-n^2)}}{2} - \eta_n$ with $0 \leq \eta_n < 1$. Then $(6.11)$ becomes 

$N^+(s) = \sum\limits_{n=1}^{\lfloor \sqrt{s} \rfloor} \left( \frac{1 + \frac{2}{n}(s-n^2)}{2} + (1-2\eta_n)\sqrt{1 + \frac{2}{n}(s-n^2)} + 2(\eta_n - \frac{1}{2})^2 \right)$ 

$= \sum\limits_{n=1}^{\lfloor \sqrt{s} \rfloor}(\frac{s}{n} - n) + \sum\limits_{n=1}^{\lfloor \sqrt{s} \rfloor}(1-2\eta_n)\sqrt{1 + \frac{2}{n}(s-n^2)} + O(\sqrt{s})$ and $\sum\limits_{n=1}^{\lfloor \sqrt{s} \rfloor}(\frac{s}{n} -n)$ 

$= s(\log{\sqrt{s}} + \gamma - \frac{1}{2}) + O(\sqrt{s})$, giving the main terms in $(6.13)$. On the other hand, $|(1- 2\eta_n)\sqrt{1 + \frac{2}{n}(s-n^2)}| \leq \sqrt{\frac{2s}{n}}$, so the remaining sum is dominated by $\sum\limits_{n=1}^{\lfloor \sqrt{s} \rfloor} \sqrt{\frac{2s}{n}} = O(s^{3/4})$, which yields the error estimate $(6.14)$.

Remark: Because $1 - 2\eta_n$ has "mean value" equal to zero, we expect the actual error to be smaller than the estimate $(6.14)$.
\end{proof}
Figure $6.5$ shows the graph of $R_{N^+(s)}/s^{3/4}$. Figure $6.6$ shows its average also divided by $s^{3/4}$. It appears possible that the limit exists for the average, but the evidence is ambiguous.
\begin{figure}
	\hspace*{-.35in}
    \includegraphics[scale = .8]{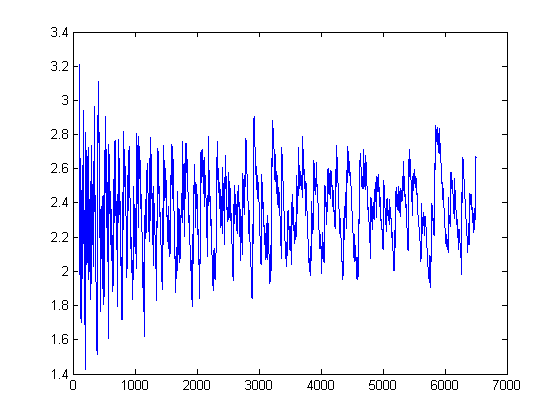}
   \caption{$R_{N^{+}}(s)/s^{3/4}$ from $(6.13)$ on $[100,6500]$}
\end{figure}
\begin{figure}
	\hspace*{-.35in}
    \includegraphics[scale = .8]{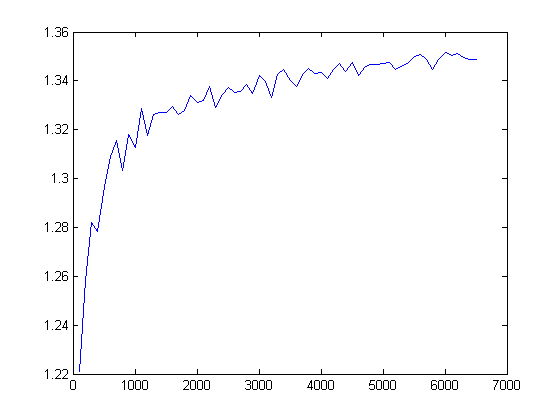}
   \caption{$AR_{N^{+}}(s)/s^{3/4}$ from $(6.13)$ on $[100,6500]$ with a sampling interval of 100}
\end{figure}
\begin{lem}
For $L_2$ we have 

$(6.15)$ $N^-(s) = \sum\limits_{k = 1}^{\lfloor \frac{\sqrt{1 + \sqrt{s}} - 1}{2} \rfloor} (2k+1)(2k^2 + 2k - 1) + 2\sum\limits_{n=1}^{\lfloor \sqrt{s} \rfloor - 1} \lfloor \frac{1 + \sqrt{1 + \frac{2}{n}(s + n^2)}}{2} \rfloor^2$
\end{lem}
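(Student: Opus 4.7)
The strategy parallels the proof of Lemma~2.3 (which handled $N^-$ for $\Box$ on $S^1\times S^2$) combined with the substitution trick of Lemma~6.4. Abbreviating $m = k(k+1)$, for each fixed $k \geq 1$ the condition $0 < m^2 - j^2 \leq s$ forces $|j| \leq m-1$, and we split on whether the second inequality $j^2 \geq m^2 - s$ is automatic.

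In \textbf{Case I}, $s \geq m^2$ (equivalently $k(k+1) \leq \sqrt{s}$), every integer $|j| \leq m-1$ is allowed, contributing $2m - 1 = 2k^2 + 2k - 1$ values of $j$; multiplied by the multiplicity $(2k+1)$ and summed over the relevant range of $k$ this produces the first sum in $(6.15)$. The value $k = 0$ is discarded because $0 < -j^2$ has no integer solutions.

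In \textbf{Case II}, $s < m^2$, the admissible $|j|$ satisfy $\lceil \sqrt{m^2 - s}\rceil \leq |j| \leq m-1$, so there are $2(m - \lceil\sqrt{m^2 - s}\rceil)$ allowed values. Following Lemma~6.4, introduce the positive integer $n = m - \lceil\sqrt{m^2 - s}\rceil$; equivalently, write $\sqrt{m^2 - s} = (m - n) - \delta$ with $\delta \in [0,1)$. Squaring and solving produces
\[
m = \frac{s + (n+\delta)^2}{2(n+\delta)}, \qquad k = \frac{-1 + \sqrt{1 + \tfrac{2}{n+\delta}\bigl(s + (n+\delta)^2\bigr)}}{2}.
\]
The function $x \mapsto \tfrac{1}{2}(s/x + x)$ is monotonic in $x = n+\delta$ on each side of $\sqrt{s}$, so inverting pins the range of integer $k$ for each $n$ as $b_{n+1} + 1 \leq k \leq b_n$, where $b_n = \lfloor\tfrac{1}{2}(-1 + \sqrt{1 + (2/n)(s+n^2)})\rfloor$. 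Writing $a_n = b_n + 1$ and using $\sum_{k=0}^{b}(2k+1) = (b+1)^2$, the inner sum over $k$ collapses to $a_n^2 - a_{n+1}^2$. The total Case~II contribution is $\sum_n 2n(a_n^2 - a_{n+1}^2)$, and Abel summation (exactly as at the end of the proof of Lemma~6.4) telescopes this to $\sum_n 2 a_n^2$, which is the second sum in $(6.15)$.

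The main technical nuisance is the bookkeeping at the Case~I/Case~II interface and at the top of the $n$-range. The ceiling function in Case~II (replacing the floor that appeared for $N^+$ in Lemma~6.4) is the principal source of off-by-one pitfalls, and one must verify that (i) the largest value of $n$ reached from Case~II is precisely $\lfloor\sqrt{s}\rfloor - 1$, with the smallest associated $k$ pairing cleanly with the largest $k$ of Case~I, and (ii) the Abel boundary term vanishes because the would-be index $a_{\lfloor\sqrt{s}\rfloor}$ corresponds to $k$-values already captured in Case~I. Once those checks are in hand, reassembling the two contributions gives $(6.15)$.
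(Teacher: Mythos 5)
Your outline reproduces the paper's own proof step for step --- the same Case I/Case II split, the same substitution $n = k(k+1)-\lceil\sqrt{(k(k+1))^2-s}\rceil$, the same telescoping --- so the approach is the intended one. The genuine gap is the check you defer in item (ii): it fails, and it fails at exactly the point where the stated formula goes wrong. In Lemma 6.4 the Abel sum $\sum_n 2n(a_n^2-a_{n+1}^2)$ collapses to $\sum_n 2a_n^2$ because the $k$-range attached to the largest value of $n$ runs all the way down to $k=0$, so the square subtracted at the bottom is $0^2$. Here the $k$-range attached to the largest achieved $n$ is cut off below at $k_0$, the smallest $k$ with $(k(k+1))^2>s$; one has $a_{\lfloor\sqrt{s}\rfloor}=k_0\approx s^{1/4}$, so the boundary term $-2(\lfloor\sqrt{s}\rfloor-1)a_{\lfloor\sqrt{s}\rfloor}^2\approx -2s$ does not vanish and cannot be absorbed into any error term (it would shift the coefficient of $s$ in Theorem 6.8 from $\gamma+\frac{3}{2}$ to $\gamma-\frac{1}{2}$).

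A concrete check at $s=100$: direct enumeration of $0<(k(k+1))^2-j^2\leq 100$ gives contributions $9,55,70,36,22,26$ from $k=1,\ldots,6$, so $N^-(100)=218$. The second sum in $(6.15)$ is $2(7^2+5^2+4^2+4^2+4^2+3^2+3^2+3^2+3^2)=316$, already larger than $218$ before the first sum is added. Restoring the boundary term gives $64+316-2\cdot 9\cdot 3^2=218$, where $64=\sum_{k=1}^{2}(2k+1)(2k^2+2k-1)$ is the Case I contribution (note also that the upper limit of the first sum should be $\lfloor(\sqrt{1+4\sqrt{s}}-1)/2\rfloor$, the largest $k$ with $k(k+1)\leq\sqrt{s}$, not $\lfloor(\sqrt{1+\sqrt{s}}-1)/2\rfloor$). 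So the identity you are asked to prove is not correct as written; a correct write-up must carry the term $-2(\lfloor\sqrt{s}\rfloor-1)a_{\lfloor\sqrt{s}\rfloor}^2$ through, rather than arguing it away in step (ii).
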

\begin{proof}
The condition $j^2 < (k(k+1))^2$ is equivalent to $k \geq 1$ and $|j| \leq k(k+1) - 1$. For the condition $(k(k+1))^2 \leq s + j^2$ we consider the cases:
\begin{enumerate}[I.]
\item
If $4 \leq (k(k+1))^2 \leq s$ then the condition is always valid, so the total number of $j$ values is $2k^2 + 2k -1$, and this contributes the first sum in $(6.15)$.
\item
If $(k(k+1))^2 > s$ then the condition is $|j| \geq \lceil \sqrt{(k(k+1))^2 - s} \rceil$, so the total number of $j$ values is $2(k(k+1) - \lceil \sqrt{(k(k+1))^2 - s} \rceil)$. We define $n$ by $\sqrt{(k(k+1))^2 - s} = k(k+1) - n - \delta$ with $0 \leq \delta < 1$, so the contribution to $N^-(s)$ from these terms is $2\sum(2k+1)n$ over the appropriate values of $k$ and $n$. If we solve for $k$ in terms of $n$ we obtain $k = \frac{-1 + \sqrt{1 + \frac{2}{n+ \delta}(s + (n+\delta)^2)}}{2}$, so the range of $k$ values is $\lfloor \frac{1 + \sqrt{1 + \frac{2}{n+1}(s + (n+1)^2)}}{2} \rfloor \leq k \leq \lfloor \frac{-1 + \sqrt{1 + \frac{2}{n}(s+n^2)}}{2} \rfloor$, and the sum over $k$ is $a_n^2 - a_{n+1}^2$ for $a_n = \lfloor \frac{1 + \sqrt{1 + \frac{2}{n}(s+n^2)}}{2} \rfloor$, which yields the sum $2\sum\limits_n a_n^2$. To obtain the bounds on $n$ we note that $n$ decreases as $k$ increases so the maximal value of $n$ occurs when $k(k+1)$ first exceeds $\sqrt{s}$, and $k(k+1) = \sqrt{s}$ corresponds to $n = \lfloor \sqrt{s} \rfloor$.
\end{enumerate}
\end{proof}
\begin{thm}
For $L_2$ we have the asymptotic formula 

$(6.16)$ $N^-(s) = \frac{1}{2}s\log{s} + (\gamma + \frac{3}{2})s + R_{N^-}(s)$ with remainder estimate

$(6.17)$ $R_{N^-}(s) = O(s^{3/4})$.
\end{thm}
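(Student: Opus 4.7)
The plan is to follow the template of Theorem 6.6, applied to the decomposition of $N^-(s)$ provided by Lemma 6.7. For the first sum in $(6.15)$, the upper summation limit $K$ is determined by the condition $k(k+1)\leq\sqrt{s}$ from Case I, so $K(K+1) = \sqrt{s} + O(1)$ and $K = O(s^{1/4})$. Expanding $(2k+1)(2k^2 + 2k - 1) = 4k^3 + 6k^2 - 1$ and applying the standard closed forms,
\[
\sum_{k=1}^{K}\bigl(4k^3 + 6k^2 - 1\bigr) = K^2(K+1)^2 + K(K+1)(2K+1) - K.
\]
The leading term $K^2(K+1)^2 = s + O(\sqrt{s})$ dominates while the remaining terms are $O(s^{3/4})$, so the first sum equals $s + O(s^{3/4})$.

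For the second sum I write $a_n := \lfloor \tfrac{1+\sqrt{1+2(s+n^2)/n}}{2}\rfloor = \tfrac{1+\beta_n}{2} - \eta_n$ with $\beta_n = \sqrt{1 + 2s/n + 2n}$ and $\eta_n \in [0,1)$. Squaring and using $(1+\beta_n)^2 = 2 + 2\beta_n + 2s/n + 2n$ yields
\[
2a_n^2 = 1 + \beta_n + \tfrac{s}{n} + n - 2(1+\beta_n)\eta_n + 2\eta_n^2.
\]
Summing over $1 \leq n \leq \lfloor\sqrt{s}\rfloor - 1$, the dominant contribution comes from $\sum(s/n + n) = \tfrac{s}{2}\log s + \gamma s + \tfrac{s}{2} + O(\sqrt{s})$, using the standard asymptotic $\sum_{n\leq M} 1/n = \log M + \gamma + O(1/M)$ together with the arithmetic sum. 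The remaining pieces $\sum\beta_n$ and $\sum(1+\beta_n)\eta_n$ are controlled by the elementary bound $\beta_n \leq \sqrt{2s/n} + \sqrt{2n} + O(1)$, each contributing $O(s^{3/4})$ since $\sum_{n\leq\sqrt{s}} n^{-1/2} = O(s^{1/4})$ and $\sum_{n\leq\sqrt{s}} n^{1/2} = O(s^{3/4})$; the $\sum\eta_n^2$ and $\sum 1$ terms are only $O(\sqrt{s})$.

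Combining the two contributions, the $s$ from the first sum joins $\tfrac{s}{2}$ from $\sum n$ and the $\gamma s$ from the harmonic sum to produce the coefficient $\gamma + \tfrac{3}{2}$, with total error $O(s^{3/4})$; this establishes $(6.16)$ and $(6.17)$. Structurally, the change in sign of $n^2$ inside the square root compared with $(6.11)$ is precisely what flips the $\sum(-n)$ of Theorem 6.6 into the present $\sum(+n)$, and the extra Case-I sum furnishes the remaining $s$; so the coefficient passes from $\gamma - \tfrac{1}{2}$ for $N^+$ to $\gamma + \tfrac{3}{2}$ for $N^-$. The main obstacle is only the routine bookkeeping of the boundary terms arising from the $\lceil\cdot\rceil$ in Case II's determination of the admissible $|j|$ and the endpoint adjustments when converting from fixed $k$ to fixed $n$; these are absorbed into $O(s^{3/4})$ exactly as in the analogous step of Theorem 6.6.
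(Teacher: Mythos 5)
Your proposal is correct and follows essentially the same route as the paper: the paper likewise evaluates the Case-I sum as $s + O(s^{3/4})$ and reduces the second sum to $\sum(\tfrac{s}{n}+n) + O(s^{3/4})$ by the floor-function expansion of Theorem 6.6, then adds. You have merely written out explicitly the bookkeeping that the paper compresses into ``As in the proof of Theorem 6.6 \dots\ Add.''
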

\begin{proof}
The first sum in $(6.15)$ is $s + O(s^{3/4})$. As in the proof of Theorem 6.6, we see that the second sum in $(6.15)$ is $\sum\limits_{n=1}^{\lfloor \sqrt{s} \rfloor} (\frac{s}{n} + n) + O(s^{3/4}) = s(\log\sqrt{s} + \gamma + \frac{1}{2}) + O(s^{3/4})$. Add.
\end{proof}
Figure $6.7$ shows the graph of $R_{N^+(s)}/s^{3/4}$. Figure $6.8$ shows its average, also divided by $s^{3/4}$. It is not apparent from Figure 6.7 that the ratio is bounded from below. On the other hand, from Figure 6.8 we speculate that the ratio $AR_{N^-}(s)/s^{3/4}$ is decreasing, so a limit would exist, although we are not able to estimate what it would be.
\begin{figure}
	\hspace*{-.35in}
    \includegraphics[scale = .8]{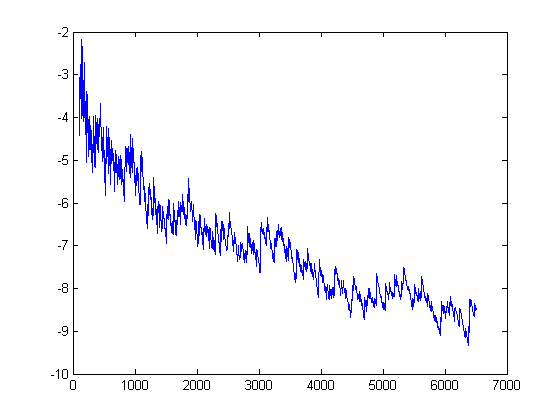}
   \caption{$R_{N^{-}}(s)/s^{3/4}$ from $(6.16)$ on $[100,6500]$}
\end{figure}
\begin{figure}
	\hspace*{-.35in}
    \includegraphics[scale = .8]{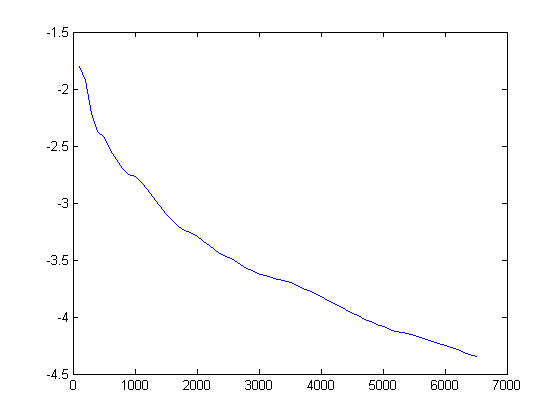}
   \caption{$AR_{N^{-}}(s)/s^{3/4}$ from $(6.16)$ on $[100,6500]$ with a sampling interval of 100}
\end{figure}
 
\end{document}